\documentclass[12pt]{amsart}
\usepackage{verbatim}
\usepackage{amscd}

\numberwithin{equation}{section}
\topmargin = -.15in
\textheight = 8.0in
\textwidth = 6.in
\oddsidemargin=.20in
\evensidemargin=.20in
\input epsf

\begin{document}

\renewcommand{\theequation}{\thesection.\arabic{equation}}
\setcounter{secnumdepth}{2}
\newtheorem{theorem}{Theorem}[section]
\newtheorem{definition}[theorem]{Definition}
\newtheorem{lemma}[theorem]{Lemma}
\newtheorem{corollary}[theorem]{Corollary}
\newtheorem{proposition}[theorem]{Proposition}
\numberwithin{equation}{section}
\theoremstyle{definition}
\newtheorem{example}[theorem]{Example}
\title[Toric GK structures. III]
{Toric generalized K$\ddot{A}$hler structures. III}

\author[Yicao Wang]{Yicao Wang}
\address
{Department of Mathematics, Hohai University, Nanjing 210098, China}
\maketitle

\baselineskip= 20pt
\begin{abstract}The paper clarifies some subtle points surrounding the definition of scalar curvature in generalized K$\ddot{a}$hler (GK) geometry. We have solved an open problem in GK geometry of symplectic type posed by R. Goto \cite{Go1} on relating the scalar curvature defined in terms of generalized pure spinors \emph{directly} to the underlying biHermitian structure. In particular, we apply this solution to toric GK geometry of symplectic type and prove that the scalar curvature suggested in this setting by L. Boulanger \cite{Bou} coincides with Goto's definition.
\end{abstract}
\section{Introduction}

The notion of scalar curvature in K$\ddot{a}$hler geometry is of special importance, especially in search of extremal metrics such as K$\ddot{a}$hler-Einstein metrics and cscK metrics. The scalar curvature is associated with the Levi-Civita connection. However, on a \emph{general} Hermitian manifold, there are several connections that could be considered, the Levi-Civita connection, the Chern connection, the Bismut connection and so on. To what extent one of these is important depends on the geometric problem one is considering. What makes K$\ddot{a}$hler geometry rather special is that all these connections actually coincide. This property would be surely lost when one starts to study non-K$\ddot{a}$hler geometry.

As a generalization of K$\ddot{a}$hler geometry, GK geometry still lacks a suitable notion of scalar curvature in its full generality. At the very early stage of GK geometry, it was fairly clear that the Bismut connection, rather the Levi-Civita one, should be more reasonable to be used. However, there are two copies of such a connection, $\nabla^+$ and $\nabla^-$, whose torsions differ from each other by a minus sign. These two connections are both necessary to treat the two underlying complex structures $J_\pm$ on the same footing. It is not clear what one has to do to combine them properly to give a notion of scalar curvature. Even we don't know whether these connections are enough to do so.

Recently, there are attempts to define the notion of scalar curvature in GK geometry. In \cite{Bou}, L. Boulanger considered toric GK structures of symplectic type and defined a notion of scalar curvature for these structures using an analogous version of the philosophy in K$\ddot{a}$hler geometry that scalar curvature should be the moment map in an infinite-dimensional GIT theory to find cscK metrics. In \cite{Go1}, R. Goto investigated general GK structures of sympletic type and succeeded in finding an analogue of Ricci form in terms of local data from \emph{generalized pure spinors} defining the geometry. Goto was able to extract a scalar curvature out of the Ricci form and prove in general that his version of scalar curvature is the moment map of the above-mentioned infinite-dimensional GIT theory. Later in \cite{Go2}, the way how the scalar curvature arises was interpreted in terms of \emph{generalized connection} in the sense of \cite{Gu2}. However, there is a gap between Boulanger's and Goto's works. In the theory of toric GK geometry of symplectic type \cite{Bou, Wang1, Wang2}, the initial data are essentially the biHermitian data underlying the geometry rather than generalized pure spinors in Goto's work, and these two kinds of data relate to each other in a highly non-linear way. Even the seemingly different definitions of scalar curvature cannot be directly compared. This made Goto to pose the problem of finding an expression of his scalar curvature in terms of the biHermitian data directly.

To solve Goto's problem is one of our goals in this paper. Our approach to the solution is to replace generalized pure spinors by their \emph{bi-spinor} cousins. If the GK manifold under consideration is spin and $S$ is the spinor bundle, it is well-known that the bi-spinor bundle $S\otimes S$ is isomorphic to the bundle $\mathcal{S}$ of generalized spinors, in other words, the bundle of forms. In particular, the presence of two complex structures $J_\pm$ provides two canonical spinor bundles $S_1$ and $S_2$. We can replace $S\otimes S$ with $S_1\otimes S_2$ and consequently the latter can on one side be directly connected to $J_\pm$ and on the other side be connected to the generalized pure spinors defining the GK geometry. This bi-spinor approach towards (generalized) Riemannian geometry is not new and is often used in compactification of string theory with non-trivial flux. See \cite{Wit} for example. What's new here is that we must specify the construction of spinor bundles using $J_\pm$ and clarify that all relevant structures expressed using generalized spinors can be transported to the bi-spinor side.

The second goal of this paper is to apply the solution to toric GK manifolds of symplectic type and establish the equivalence of Boulanger's and Goto's definitions of scalar curvature in this setting. This equivalence can only be justified after we have refined Goto's definition of Ricci form. Goto's investigation depends heavily on generalized pure spinors and contains some subtleties to be clarified. We show that the Ricci curvature can be totally interpreted in terms of ordinary connections. This refinement makes it \emph{conceptually} clear how to analyze the toric case in which there is a canonical way to find the proper bi-spinors.

The paper is organized as follows. In \S~\ref{sec1}, we collect some background material on GK geometry. \S~\ref{sec2} is a short section covering the biholomorphic geometry on a GK manifold. This is crucial to motivate our bi-spinor approach. As a byproduct we suggest a notion of scalar curvature in this section, which however will not be used later. \S~\ref{sec3} is devoted to establishing the precise relation between bi-spinors and generalized spinors. The point is to show that all relevant structures involved in Goto's problem can be transported to the bi-spinor side. In \S~\ref{sym}, we recall the basics of GK structures of symplectic type and the construction in \cite{Wang1, Wang2} which provides plenty of toric examples. In \S~\ref{goto}, Goto's definition of Ricci form and scalar curvature is reviewed and refined in terms of ordinary connections. In particular, several subtle points surrounding the Ricci curvature is clarified. The last section \S~\ref{sec5} is to address the scalar curvature of a toric GK manifold of symplectic type. In \S~\ref{bou}, Boulanger's formalism to define the scalar curvature is recalled and as a corollary we derive the general formula of scalar curvature which was missing in \cite{Bou}. \S~\ref{goto2} is devoted to deriving an explicit expression of Goto's scalar curvature in the setting of toric GK geometry of symplectic type. This involves a detailed analysis of the generalized holomorphic structure on the canonical line bundle of (generalized) pure spinors. The final expression of the scalar curvature is given in Prop.~\ref{scal}. In \S~\ref{equ}, this expression is shown to be equivalent to the one derived in Boulanger's formalism.
\section{Basics of GK geometry}\label{sec1}
In this section, we collect the most relevant knowledge on GK geometry. The basic references are \cite{Gu00, Gu0, Gu2}.

Generalized geometry starts with the basic idea of replacing tangent bundles by exact Courant algebroids. A Courant algebroid $E$ is a real vector bundle $E$ over a smooth manifold $M$, together with an anchor map $\pi$ to the tangent bundle $T$ of $M$, a non-degenerate inner product $(\cdot, \cdot)$ and a so-called Courant bracket $[\cdot , \cdot]_c$ on $\Gamma(E)$. These structures should satisfy some compatibility axioms we won't recall here. $E$ is called exact, if the short sequence \[0\longrightarrow T^*\stackrel{\pi^*}\longrightarrow E \stackrel{\pi}\longrightarrow T \longrightarrow0\]
is exact. We only consider exact Courant algrbroids in this paper. Given $E$, one can always find an isotropic right splitting $\mathfrak{s}:T\rightarrow E$, with a curvature form $H\in \Omega_{cl}^3(M)$ defined by
\[H(X,Y,Z)=([\mathfrak{s}(X),\mathfrak{s}(Y)]_c,\mathfrak{s}(Z)),\quad X, Y, Z\in \Gamma(T).\]
  By the bundle isomorphism $\mathfrak{s}+\pi^*:T\oplus T^*\rightarrow E$, the Courant algebroid structure on $E$ can be transported onto $T\oplus T^*$. Then the inner product $(\cdot,\cdot)$ is the natural pairing, i.e.
$( X+\xi,Y+\eta)=\xi(Y)+\eta(X)$, and the Courant bracket is
\begin{equation*}[X+\xi, Y+\eta]_H=[X,Y]+\mathcal{L}_X\eta-\iota_Yd\xi+\iota_Y\iota_XH,\end{equation*}
called the $H$-twisted Courant bracket. Different splittings are related by B-tranforms: \[e^B(X+\xi)=X+\xi+B(X),\]
where $B$ is a real 2-form.
\begin{definition}
 A generalized complex (GC) structure on a Courant algebroid $E$ is a complex structure $\mathbb{J}$ on $E$ orthogonal w.r.t. the inner product and its $\sqrt{-1}$-eigenbundle $L\subset E\otimes\mathbb{C}$ is involutive under the Courant bracket.
\end{definition}
For a GC structure $\mathbb{J}$, we have the decomposition $E\otimes{\mathbb{C}}=L\oplus \bar{L}$. Since $\mathbb{J}$ and its $\sqrt{-1}$-eigenbundle $L$ are equivalent notions, we shall occasionally use them interchangeably to denote a GC structure. For $H\equiv0$, ordinary complex and symplectic structures are extreme examples of GC structures. Precisely, for a complex structure $I$ and a symplectic structure $\omega$, the corresponding GC structures are of the following form:
\[\mathbb{J}_I=\left(
                 \begin{array}{cc}
                   -I & 0 \\
                   0 & I^* \\
                 \end{array}
               \right),\quad \mathbb{J}_\omega=\left(
                                                 \begin{array}{cc}
                                                   0 & \omega^{-1} \\
                                                   -\omega & 0 \\
                                                 \end{array}
                                               \right).
\]

A GC structure $L$ is an example of complex Lie algebroids. Via the inner product, $\wedge^\cdot L^*$ can be identified with $\wedge^\cdot \bar{L}$, and we then have an elliptic differential complex $(\Gamma(\wedge^\cdot \bar{L}), d_L)$, which induces the Lie algebroid cohomology associated with the Lie algebroid $L$. The differential complex can be twisted by an $L$-module.
\begin{definition}
Given a GC structure $L$ over $M$, an
$L$-connection $\mathrm{D}$ in a complex vector bundle $W$ is a linear differential operator
$\mathrm{D}:\Gamma(W)\longrightarrow \Gamma(\bar{L}\otimes W)$
satisfying
$$\mathrm{D}(f s)=d_Lf\otimes s+f\mathrm{D}s,\quad s\in \Gamma(W),\ f\in C^\infty(M).$$
If $\mathrm{D}$ is flat, i.e. $\mathrm{D}^2=0$, $\mathrm{D}$ is called a generalized holomorphic (GH) structure and $W$ an $L$-module or a GH vector bundle.
\end{definition}

There is also a notion of generalized connection in generalized geometry \cite{Gu2}.
\begin{definition}
A generalized connection in a vector bundle $W$ over $M$ is a linear differential operator $D:\Gamma(W)\rightarrow \Gamma(E\otimes W)$
such that 
\[D(fs)=\pi^*df\otimes s+fDs.\quad s\in \Gamma(W),\ f\in C^\infty(M).\]
 If further $W$ has an Hermitian structure $h$, then $D$ is compatible with $h$ when $D$ preserves $h$ in the usual sense.
\end{definition}
 Given a splitting, a generalized connection $D$ is the summation of an ordinary connection $\nabla$ and an endomorphism-valued vector filed $\chi$. The latter is canonical while the former will change if a different splitting is chosen. If $W$ is further equipped with a GH structure $\mathrm{D}$ w.r.t. a GC structure $\mathbb{J}$, then there is a canonical generalized connection $D$ which is compatible with the Hermitian metric and whose $\bar{L}$-part is precisely $\mathrm{D}$. This is the generalized analogue of the fact that a holomorphic structure and an Hermitian metric on a complex vector bundle uniquely determine the Chern connection.

 The following is the generalized version of a Riemannian metric.

\begin{definition}A generalized metric on a Courant algebroid $E$ is an orthogonal, self-adjoint operator $\mathcal{G}$ such that $( \mathcal{G}\cdot,\cdot)$ is positive-definite on $E$. It is necessary that $\mathcal{G}^2=\textup{Id}$.
 \end{definition}
 A generalized metric induces a \emph{canonical} isotropic splitting: $E=\mathcal{G}(T^*)\oplus T^*$. It is called \emph{the metric splitting}. Given a generalized metric, we shall often choose its metric splitting to identify $E$ with $T\oplus T^*$. However, other splittings are also used in specific situations in this paper later. Note that in the metric splitting $\mathcal{G}$ is of the form $\left(\begin{array}{cc} 0 & g^{-1} \\g & 0 \\
\end{array} \right)$ with $g$ an ordinary Riemannian metric.

  A generalized metric is an ingredient of a GK structure.
\begin{definition}
A GK structure on $E$ is a pair of commuting GC structures $(\mathbb{J}_1,\mathbb{J}_2)$ such that $\mathcal{G}=-\mathbb{J}_1 \mathbb{J}_2$ is a generalized metric.
\end{definition}
Since $\mathbb{J}_1, \mathbb{J}_2$ commute, $(T\oplus T^*)\otimes\mathbb{C}$ can be decomposed further:
$$(T\oplus T^*)\otimes\mathbb{C}=L_+\oplus L_-\oplus \bar{L}_+\oplus \bar{L}_-,$$
where $L_\pm$ are the $(\sqrt{-1},\pm \sqrt{-1})$-eigenbundles of
$(\mathbb{J}_1,\mathbb{J}_2)$ respectively; in particular, the $\sqrt{-1}$-eigenbundle of $\mathbb{J}_1$ is thus $L_1=L_+\oplus L_-$.

A GK structure can also be reformulated in a biHermitian manner: There are two complex structures $J_\pm$ on $M$ compatible with the metric $g$ induced from the generalized metric. Let $\omega_\pm=gJ_\pm$. Then \emph{in the metric splitting} the GC structures and the corresponding biHermitian data are related by the Gualtieri map:
 \[\mathbb{J}_1=\frac{1}{2}\left(
  \begin{array}{cc}
    -J_+-J_-& \omega_+^{-1}-\omega_-^{-1} \\
    -\omega_++\omega_- & J_+^*+J_-^* \\
  \end{array}
\right),\quad \mathbb{J}_2=\frac{1}{2}\left(
  \begin{array}{cc}
    -J_++J_-& \omega_+^{-1}+\omega_-^{-1} \\
    -\omega_+-\omega_- & J_+^*-J_-^* \\
  \end{array}
\right),\]
and in particular
\[L_\pm=(\textup{Id}\pm g)T_\pm^{0,1},\]
where $T_\pm^{0,1}$ are the $(0,1)$-part of $T\otimes\mathbb{C}$ w.r.t. $J_\pm$ respectively. Integrability of $(\mathbb{J}_1, \mathbb{J}_2)$ then takes the following form:
\[d^c_+\omega_++d^c_-\omega_-=0,\quad dd^c_+\omega_+=0,\]
where $d^c_\pm=[d, J^*_\pm]$. Then $H=-d^c_+\omega_+$ and we have two Bismut connections $\nabla^{\pm}$ on $T$:
\[\nabla^\pm_XY=\nabla_X^{LC}Y\pm\frac{1}{2}g^{-1}H(X, Y),\]
where $\nabla^{LC}$ is the Levi-Civita connection and we have viewed $H$ as a map from $T\times T$ to $T^*$. An important property of $\nabla^\pm$ is that they preserve $J_\pm$ respectively, just as on a K$\ddot{a}$hler manifold the underlying complex structure is preserved by the Levi-Civita connection.

We are particularly interested in GH line bundles on a GK manifold. In this setting, we always choose $L_1$ to be the underlying GC structure. Due to the decomposition $L_1=L_+\oplus L_-$, a GH structure $\mathrm{D}$ can be decomposed as $\mathrm{D}=\bar{\delta}_++\bar{\delta}_-$ accordingly. Actually, $\bar{\delta}_\pm$ are, in essence, ordinary $J_\pm$-holomorphic structures respectively. Additionally, it is necessary that \begin{equation}\bar{\delta}_+\bar{\delta}_-+\bar{\delta}_-\bar{\delta}_+=0.\label{ghc}\end{equation}
 Conversely, given $J_\pm$-holomorphic structures $\bar{\delta}_\pm$, if Eq.~(\ref{ghc}) is also satisfied, then $\mathrm{D}:=\bar{\delta}_++\bar{\delta}_-$ is a GH structure. If the GH line bundle is additionally endowed with an Hermitian metric, then the canonical generalized connection takes a simple form in the metric splitting:
 \[D=\frac{1}{2}(\nabla_++\nabla_-)+\frac{1}{2}g^{-1}(\nabla_+-\nabla_-),\]
 where $\nabla_\pm$ are the Chern connections associated to $\bar{\delta}_\pm$ respectively.

\section{Biholomorphic geometry on GK manifolds}\label{sec2}
This short section is mainly devoted to emphasizing a few simple yet key observations to motivate our later considerations. A more thorough account of the material presented here can be found in \cite{Gu3}.

Let $(M, J_\pm, g)$ be the biHermitian data associated to a GK manifold and $R^\pm$ be the curvature of $\nabla^{\pm}$ respectively. By abusing of notation, we also regard $R^\pm$ as covariant tensors, i.e., for $X, Y, Z, W\in T$,
\begin{equation}R^\pm(X,Y,Z,W)=g(R^\pm(X,Y)Z, W).\label{Bismut}\end{equation}
Then the first basic identity is
\begin{equation}R^+(X,Y,Z,W)=R^-(Z,W, X,Y).\label{B1}\end{equation}
To the author's knowledge, this fact goes back as early as to Bismut \cite{Bis}. The next basic fact seems never explicitly mentioned in the published literature:
\begin{lemma} \label{key}For $X,Y,Z,W\in T$,
\begin{equation}R^+(J_-X,J_-Y,Z,W)=R^+(X,Y,Z,W),\label{basic1}\end{equation}
\begin{equation}R^-(J_+X,J_+Y,Z,W)=R^-(X,Y,Z,W).\label{basic2}\end{equation}
\end{lemma}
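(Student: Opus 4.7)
The plan is to combine two ingredients that are already on the table: the Bismut symmetry $R^+(X,Y,Z,W)=R^-(Z,W,X,Y)$ from Eq.~\eqref{B1}, and the fact, emphasized at the end of \S~\ref{sec1}, that $\nabla^\pm$ preserve $J_\pm$ respectively. The content of the lemma is then essentially a bookkeeping observation: each $\nabla^\pm$ trivially gives $J_\pm$-invariance of $R^\pm$ in the \emph{last two} arguments, but Eq.~\eqref{B1} lets us trade last-two for first-two after switching $+\leftrightarrow -$.

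First I would record the easy direction. Since $\nabla^+J_+=0$, the endomorphism $R^+(X,Y)\in\mathrm{End}(T)$ commutes with $J_+$; combining this with the fact that $g$ is $J_+$-invariant gives
\[
R^+(X,Y,J_+Z,J_+W)=g(J_+R^+(X,Y)Z,J_+W)=R^+(X,Y,Z,W),
\]
and exactly the same calculation with $+$ replaced by $-$ yields $R^-(X,Y,J_-Z,J_-W)=R^-(X,Y,Z,W)$. So $R^\pm$ is $J_\pm$-invariant in its last pair of arguments.

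Next I would apply \eqref{B1} to transport this invariance to the first pair of arguments, but with the opposite sign. Specifically,
\[
R^+(J_-X,J_-Y,Z,W)\stackrel{\eqref{B1}}{=}R^-(Z,W,J_-X,J_-Y)=R^-(Z,W,X,Y)\stackrel{\eqref{B1}}{=}R^+(X,Y,Z,W),
\]
which is \eqref{basic1}. The identity \eqref{basic2} follows by swapping the roles of $+$ and $-$ throughout.

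Because both ingredients are already available and the manipulation is purely formal, I do not foresee a genuine obstacle; the only point worth double-checking is that the covariant-tensor convention \eqref{Bismut} really does turn the endomorphism identity $[R^\pm(X,Y),J_\pm]=0$ into $J_\pm$-invariance of the last two slots (which requires compatibility of $g$ with $J_\pm$, and this is built into the biHermitian data). Once this is noted the proof is immediate.
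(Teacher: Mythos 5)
Your proof is correct and follows essentially the same route as the paper: establish that $R^{\mp}(Z,W)$ commutes with $J_{\mp}$ (since $\nabla^{\mp}J_{\mp}=0$) and that $g$ is $J_{\mp}$-Hermitian, giving invariance in the last two slots, then apply Eq.~(\ref{B1}) on both sides to transfer this to the first two slots of $R^{\pm}$. No gaps; the only difference is presentational, as the paper carries out the same argument in a single chain of equalities.
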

\begin{proof}
Note that
\begin{eqnarray*}R^+(J_-X,J_-Y,Z,W)&=&R^-(Z, W, J_-X,J_-Y)\\
&=&g(\nabla^-_Z\nabla_W^-(J_-X)-\nabla^-_W\nabla_Z^-(J_-X)-\nabla^-_{[Z,W]}(J_-X),J_-Y)\\
&=&g(J_-R^-(Z,W)X,J_-Y)=R^-(Z,W,X,Y)\\
&=& R^+(X,Y,Z,W),\end{eqnarray*}
where we have used Eq.~(\ref{Bismut}), Eq.~(\ref{B1}) and the facts that $\nabla^-$ preserves $J_-$ and $g$ is Hermitian w.r.t. $J_-$. The second identity can be similarly proved.
\end{proof}
Before proceeding further, let us give some comments on the implication of the above lemma to motivate our later development. Let $T_{\pm}^{1,0}$ be the $J_\pm$-holomorphic tangent bundles respectively. Since $\nabla^+$ preserves $J_+$, $\nabla^+$ can be restricted on $T^{1,0}_+$ and it is compatible with the Hermitian metric $h_+$ on $T^{1,0}_+$ induced from $g$. Then Eq.~(\ref{basic1}) precisely means that $T^{1,0}_+$ is a complex vector bundle equipped with an Hermitian connection whose curvature is of type (1,1) w.r.t. the complex structure $J_-$. Consequently, $T^{1,0}_+$ is a holomorphic vector bundle over the complex manifold $(M, J_-)$ and $\nabla^+$ is actually the Chern connection associated to this holomorphic structure and the Hermitian metric $h_+$. A similar argument applies to $T^{1,0}_-$, using Eq.~(\ref{basic2}). We summarize the above observation in the following proposition.
\begin{proposition}Let $h_\pm$ be the Hermitian metrics on $T^{1,0}_\pm$ induced from $g$. The connection $\nabla^\pm$ gives rise to a $J_\mp$-holomorphic structure on $T^{1,0}_\pm$ and it is actually the Chern connection associated to this holomorphic structure and the Hermitian metric $h_\pm$.
\end{proposition}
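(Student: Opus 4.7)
The plan is to prove the statement for $\nabla^+$ acting on $T^{1,0}_+$; the case of $\nabla^-$ on $T^{1,0}_-$ follows symmetrically by exchanging the roles of $+$ and $-$. The approach rests on three ingredients: (i) the restriction of $\nabla^+$ to $T^{1,0}_+$ is well-defined and Hermitian for $h_+$; (ii) its curvature is of type $(1,1)$ with respect to $J_-$; (iii) the standard fact that a Hermitian connection with $(1,1)$ curvature on a smooth complex vector bundle over a complex manifold canonically determines a holomorphic structure, for which the given connection is the associated Chern connection.

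For (i), since $\nabla^+$ preserves $J_+$, the eigenbundle decomposition $T\otimes\mathbb{C}=T^{1,0}_+\oplus T^{0,1}_+$ is $\nabla^+$-parallel, so $\nabla^+$ restricts to a connection on $T^{1,0}_+$. Because $\nabla^+$ is metric for $g$ and $h_+$ is induced from $g$, this restriction automatically preserves $h_+$.

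For (ii), I would invoke Eq.~(\ref{basic1}) of Lemma~\ref{key}, which says $R^+(J_-X,J_-Y,Z,W)=R^+(X,Y,Z,W)$. Viewing $R^+$ as an $\mathrm{End}(T^{1,0}_+)$-valued $2$-form on $M$, this $J_-$-invariance in the first two arguments is equivalent to $R^+$ having vanishing $(2,0)$- and $(0,2)$-parts with respect to $J_-$, i.e.\ being of type $(1,1)$.

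Given (i) and (ii), step (iii) closes the argument: the $(0,1)$-part of the restricted $\nabla^+$ along $J_-$ squares to zero, since its square is exactly the $(0,2)$-component of $R^+$, which vanishes. Since $J_-$ is integrable (as part of the GK data), this $\bar{\partial}$-operator endows $T^{1,0}_+$ with a genuine $J_-$-holomorphic structure, and uniqueness of the Chern connection associated with a holomorphic Hermitian vector bundle then identifies $\nabla^+$ with it. No single step looks hard: the substantive input is Lemma~\ref{key}, and what remains is routine bookkeeping with standard facts about Hermitian connections.
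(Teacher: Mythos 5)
Your proposal is correct and follows essentially the same route as the paper: restrict $\nabla^+$ to $T^{1,0}_+$ using $\nabla^+J_+=0$ and metric compatibility, invoke Eq.~(\ref{basic1}) of Lemma~\ref{key} to see the curvature is of type $(1,1)$ with respect to $J_-$, and then appeal to the standard fact (Koszul--Malgrange plus uniqueness of the Chern connection) to conclude. The only difference is that you spell out the last step (the $(0,1)$-part squaring to zero) which the paper leaves implicit.
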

This simple observation is responsible for many phenomena in GK geometry. For example, let $K_\pm$ be the canonical line bundle of $J_\pm$ respectively. Then the line bundles $K_\pm$, $K_+\otimes K_-$ and $K_+^{-1}\otimes K_-^{-1}$ are all both $J_+$-holomorphic and $J_-$-holomorphic in a natural way. Actually, more is true:
\begin{proposition}\label{holo2}With the above biholomorphic structure, $K_+\otimes K_-$ is actually a $\mathbb{J}_1$-GH line bundle. The claim holds as well for $K_+^{-1}\otimes K_-^{-1}$.
\end{proposition}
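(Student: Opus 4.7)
The plan is to build a $\mathbb{J}_1$-GH structure on $K_+\otimes K_-$ out of the biholomorphic data supplied by the previous proposition, and then verify the cross-compatibility~(\ref{ghc}). The statement for $K_+^{-1}\otimes K_-^{-1}$ will follow by duality, since the dual of any GH line bundle is again GH with respect to the same GC structure.

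First I would identify the two $\bar\delta$'s on each factor. By the previous proposition, $T^{1,0}_+$ is $J_-$-holomorphic with Chern connection $\nabla^+$; taking determinants, $K_+=\det T^{*1,0}_+$ inherits a $J_-$-holomorphic structure $\bar\delta_-^{K_+}$, in addition to its tautological $J_+$-holomorphic structure $\bar\delta_+^{K_+}=\bar\partial_+$. Symmetrically, $K_-$ comes equipped with $\bar\delta_-^{K_-}=\bar\partial_-$ together with an induced $J_+$-holomorphic structure $\bar\delta_+^{K_-}$ arising from $\nabla^-$. Declaring
\[
\bar\delta_+:=\bar\delta_+^{K_+}\otimes 1+1\otimes\bar\delta_+^{K_-},\qquad
\bar\delta_-:=\bar\delta_-^{K_+}\otimes 1+1\otimes\bar\delta_-^{K_-},
\]
the Leibniz rule makes these into honest $J_\pm$-holomorphic structures on the line bundle $K_+\otimes K_-$; in particular $\bar\delta_\pm^2=0$ automatically.

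The crux is then to verify the cross-compatibility $\bar\delta_+\bar\delta_-+\bar\delta_-\bar\delta_+=0$ from~(\ref{ghc}). I would work in a local unitary trivialization of $K_+\otimes K_-$ induced by $h_+\otimes h_-$ and write $\bar\delta_\pm=\bar\partial_\pm+\theta_\pm$, where $\theta_+\in\Omega^{0,1}_{J_+}$ and $\theta_-\in\Omega^{0,1}_{J_-}$ are the connection one-forms of the Chern connections of the two structures, expressible by the previous proposition in terms of the trace of $\nabla^\mp$ on $K_\mp$. The anticommutator then reduces to a mixed-type identity of the shape
\[
\bar\partial_-\theta_++\bar\partial_+\theta_-=0,
\]
which I would establish by feeding in the explicit expressions for $\theta_\pm$ and exploiting the GK integrability conditions $d^c_+\omega_++d^c_-\omega_-=0$ and $dd^c_+\omega_+=0$, together with the Lemma~\ref{key} symmetries. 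A more conceptual alternative, useful as a sanity check, is to identify $K_+^{-1}\otimes K_-^{-1}\cong\det\bar L_+\otimes\det\bar L_-=\det\bar L_1$ with the canonical pure spinor line of $\mathbb{J}_1$, which tautologically carries a $\mathbb{J}_1$-GH structure from the $\bar L_1$-module structure on spinors, and to match this structure with the biholomorphic one just constructed; dualizing then yields the claim for $K_+\otimes K_-$ as well.

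The main obstacle I anticipate is precisely the mixed Dolbeault cancellation $\bar\partial_-\theta_++\bar\partial_+\theta_-=0$: each $\theta_\pm$ intrinsically only sees one of the two complex structures, so extracting the cross-type component forces one to pass through the GK integrability in a genuine way. Careful book-keeping with the metric splitting versus $B$-transformed splittings, and with the sign conventions in $H=-d^c_+\omega_+=d^c_-\omega_-$, will be the main source of potential confusion.
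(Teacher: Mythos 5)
Your setup is consistent with what the paper means by ``the above biholomorphic structure'': $\bar\delta_+$ is the tautological $J_+$-structure on $K_+$ tensored with the $(0,1)_{J_+}$-part of $\nabla^-$ on $K_-$, and symmetrically for $\bar\delta_-$, and you correctly reduce the proposition to the cross-compatibility condition~(\ref{ghc}), with the statements for the square and the dual following formally. The genuine gap is that the one step carrying all the mathematical content --- the mixed cancellation $\bar\partial_-\theta_++\bar\partial_+\theta_-=0$ (more precisely, the vanishing of the mixed $\bar L_+\otimes\bar L_-$ component of the total curvature) --- is never established: you only announce that you ``would'' derive it from the explicit $\theta_\pm$, the integrability conditions and Lemma~\ref{key}, and you yourself flag it as the main obstacle. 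As it stands this is a plan, not a proof; nothing in the proposal shows that the cancellation actually happens, and it is not a routine bookkeeping identity: the two connection forms live in Dolbeault complexes of two different complex structures, so even formulating the mixed component requires care, and the vanishing genuinely uses the torsion identity relating the Chern and Bismut connections, not just $d^c_+\omega_++d^c_-\omega_-=0$ at face value.

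For comparison, the paper never verifies~(\ref{ghc}) directly. It proves the statement in \S~\ref{spGK} by transporting structures through the bi-spinor isomorphism $q^\flat:S_1\otimes S_2\cong\mathcal S$: the pure spinor line $U_{n,0}$ carries its standard $\mathbb{J}_1$-GH structure (flat because it comes from $d_H$ with $d_H^2=0$), $q^\flat$ sends $K_+^{1/2}\otimes K_-^{1/2}$ onto $U_{n,0}$, and the substantive work is to show that the transported $\bar\delta_\pm$ coincide with the biholomorphic operators --- this uses the generalized spin connection $\nabla^{sp}$, the formula expressing $d_H$ through $\nabla^{sp}$, and the explicit relation $(\nabla^c_XY,Z)=(\nabla^+_XY,Z)-\tfrac12 H(X,J_+Y,J_+Z)-\tfrac12 H(X,Y,Z)$ between the Chern and Bismut connections (Eq.~(\ref{holo})); the claims for $K_+\otimes K_-$ and $K_+^{-1}\otimes K_-^{-1}$ then follow by squaring and dualizing. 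Your ``conceptual alternative'' gestures at exactly this route, but it too leaves the decisive matching of the tautological GH structure with the biholomorphic one unproved, and it slightly misidentifies the relevant line: under $q^\flat$ the pure spinor line $U_{n,0}$ corresponds to $K_+^{1/2}\otimes K_-^{1/2}$, while $\det\bar L_1\cong K_+^{-1}\otimes K_-^{-1}$ is reached only after dualizing and squaring, which is precisely the paper's concluding remark. So either branch of your proposal needs the missing computation (or the missing identification) to be carried out before it constitutes a proof.
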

\begin{proof}
We postpone a brief proof until \S~\ref{spGK}.
\end{proof}

As another application of Lemma~\ref{key}, we suggest a \emph{general} definition of scalar curvature in GK geometry. However, this definition will not be used afterward. Note that the anti-canonical line bundle $K_+^{-1}$ of $(M, J_+)$ is a $J_-$-holomorphic line bundle and we still denote the compatible connection by $\nabla^+$. Let $R$ be the curvature of this connection.
\begin{definition}Since $\sqrt{-1}R$ is of type (1,1) w.r.t. $J_-$, we can take the trace of $\sqrt{-1}R$ w.r.t. the Hermitian metric $h_-$ on $(M, J_-)$ induced from $g$, i.e.
\[\kappa_c:=2R_{i\bar{j}}h_-^{i\bar{j}}.\]
We call $\kappa_c$ the canonical scalar curvature of the GK manifold under consideration.
\end{definition}
\emph{Remark}. If $J_+=J_-$, we are in the ordinary K$\ddot{a}$hler case and the definition coincides with the usual one. We can certainly define a scalar curvature in a similar manner using $K_-^{-1}$, $\nabla^-$ and the Hermitian metric $h_-$. However, due to Eq.~(\ref{B1}) and Lemma~\ref{key}, this will give the same result. Therefore our definition above actually treats the two complex structures $J_\pm$ on the same footing.

\section{Ordinary spinors and generalized spinors in GK geometry}\label{sec3}
\subsection{Bi-spinors}\label{bi}Material in this subsection is, more or less, well-established in the literature. However, we find it hard to obtain standard references which fit in well with our present context. Thus we choose to collect the necessary material scattered in the literature and spell out some details. As for general spin geometry, \cite{Law} is the basic reference. 

Let $(V,g)$ be a $2n$-dimensional Euclidian vector space and $Cl(V,g)$ the associated Clifford algebra subject to the relation
\[v\cdot w+w\cdot v=2g(v, w).\] There is a canonical isomorphism $\mathfrak{J}$ from $Cl(V,g)$ to $\wedge^\cdot V$ as vector spaces (we
will also identify $V$ with $V^*$ using $g$). Indeed, if $e_1,\cdots,e_p\in V$ are orthogonal, then
$\mathfrak{J}(e_1\cdots e_p)=e_1\wedge\cdots\wedge e_p$. If $v\in V, a\in Cl(V,g)$, then
\begin{equation}\mathfrak{J}(v\cdot a)=v\wedge \mathfrak{J}(a)+\iota_v\mathfrak{J}(a),\quad\quad \mathfrak{J}(a\cdot v)=v\wedge\widetilde{\mathfrak{J}(a)}-\iota_v\widetilde{\mathfrak{J}(a)},\label{cw}\end{equation}
where $\iota$ denotes contraction via $g$, and
$\widetilde{\cdot}$ indicates multiplication by $\pm1$ depending on the parity of $a$.

 Up to isomorphism $Cl(V,g)\otimes\mathbb{C}$ has a unique irreducible complex
representation $(\Delta,\rho)$ such that
$$\rho: Cl(V,g)\otimes\mathbb{C}\cong \text{End}(\Delta).$$
As a $\textup{Spin}(2n)$-module, this representation $\Delta$ can be written as $\Delta_+\oplus\Delta_-$ according to chirality, i.e., the eigenvalues of $\gamma:=(\sqrt{-1})^ne_1e_2\cdots e_{2n}$ for an oriented orthonormal basis $\{e_i\}$. $\Delta_\pm$ are both of dimension $2^{n-1}$.

If $(V,g)$ is further equipped with a $g$-compatible complex structure $J$, the above essentially unique representation can be constructed explicitly. Actually, let $V_{0,1}^*$ be the $(0,1)$-part of $V^*\otimes\mathbb{C}$. Then we can use $\wedge^\cdot V_{0,1}^*$ as the space of spinors. If $\{f_i\}$ is a unitary basis of $V^{1,0}$, and $\{f^i\}$ its dual, then we can take the representation of $Cl(V,g)\otimes\mathbb{C}$ on $\wedge^\cdot V_{0,1}^*$ to be
\begin{equation}\label{cr}\rho(\overline{f}_i)=\sqrt{2}\iota_{\overline{f}_i},\quad \rho(f_i)=\sqrt{2}\overline{f}^i\wedge\cdot.\end{equation}
 In fact, some more structures are introduced by this specification: $\Delta$ has a natural $\mathbb{Z}$-grading and the Hermitian metric $g$ equips $\Delta$ with a natural metric $h$, with respect to which the representation of $\textup{Spin}(2n)$ is unitary and components of different degrees are orthogonal. In particular, we have
\[h(a\cdot \phi,\psi)=h(\phi, a^\dag\cdot \psi),\quad \phi, \psi\in \Delta, a\in Cl(V,g)\otimes\mathbb{C}.\]
where $\dag$ is the conjugate linear extension of the order-reversing map $\mu(e_1\cdots e_p)=e_p\cdots e_1$ for orthogonal vectors $e_1,\cdots, e_p$.

Let us move on to an Hermitian manifold $(M, g, J)$ of complex dimension $n$ and we then have the Clifford bundle $Cl(T, g)$. For simplicity, assume that the canonical line bundle $K$ of $J$ has a square root $K^{1/2}$ and therefore $M$ is spin (\emph{as for the bi-spinor construction this requirement can finally be dropped }). Then the spinor bundle is\footnote{The square root $K^{1/2}$ is inserted to guarantee that we can really lift the structure group of $M$ from $\textup{SO}(2n)$ to $\textup{Spin}(2n)$. }
\[S=\oplus_{p}(\wedge^pT^*_{0,1}\otimes K^{1/2})\]
with (we choose the orientation induced by $J$)
\[S_+=\oplus_{even}(\wedge^pT^*_{0,1}\otimes K^{1/2}),\quad S_-=\oplus_{odd}(\wedge^pT^*_{0,1}\otimes K^{1/2}).\]
In particular, $J$ is characterized by the pure spinor line bundle $K^{1/2}\subset S_+$ in the following way:
 \[T_{0,1}=\{X\in T\otimes \mathbb{C}|X\cdot \phi=0, \forall \phi\in K^{1/2}\}.\]
 There is a canonical $\textup{Spin}(2n)$-invariant bilinear pairing $q: S\times S\rightarrow M\times \mathbb{C}$. Precisely if locally $s_0$ is a unitary frame of $K^{1/2}$ w.r.t. the canonical metric, and $\phi=f_Id\bar{z}_I\otimes s_0, \psi=g_{I'}d\bar{z}_{I'}\otimes s_0\in \Gamma(S)$ where $I, I'$ are multi-indices, then
\[q(\phi, \psi)=(\mu(f_Id\bar{z}_I)\wedge g_{I'}d\bar{z}_{I'})_{top}/\overline{s_0^2},\]
where $\mu(d\bar{z}_1\cdots d\bar{z}_p)=d\bar{z}_p\cdots d\bar{z}_1$ and "top" means taking the component of top degree. $q$ gives rise to an isomorphism $q^\sharp$ from $S\otimes S$ to $\textup{Hom}(S)$ via
\[q^\sharp(\phi\otimes \psi)(\varphi)=q(\psi, \varphi)\phi,\quad \phi, \psi, \varphi\in \Gamma(S).\]
Followed by $\mathfrak{J}\circ\rho^{-1}$, this gives an isomorphism
\[\mathfrak{J}\circ\rho^{-1}\circ q^\sharp: S\otimes S\rightarrow\wedge^\cdot (T^*\otimes\mathbb{C}).\]
 For $\phi, \psi\in S$ we will call $\phi\otimes \psi$ a bi-spinor and use $[\phi\otimes \psi]$ to denote its image under the above isomorphism. Note that on one side $S\otimes S$ has its natural Hermitian metric induced from that on $S$ and on the other side $\wedge^\cdot (T^*\otimes\mathbb{C})$ also has its Hodge metric induced from $g$. What matters for us is the following
\begin{proposition}The Hodge metric on $\wedge^\cdot (T^*\otimes\mathbb{C})$ can be rescaled with a constant such that the map $\mathfrak{J}\circ\rho^{-1}\circ q^\sharp$ is an isometry from $S\otimes S$ to $\wedge^\cdot (T^*\otimes\mathbb{C})$.
\end{proposition}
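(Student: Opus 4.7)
The statement is pointwise and each map appearing in the composition $\mathfrak{J}\circ\rho^{-1}\circ q^\sharp$ is $\textup{Spin}(2n)$-equivariant, so my plan is to fix a point, pass to a fiber, and verify the claim directly on adapted bases. Concretely, I would fix a unitary basis $\{f_i\}$ of $T^{1,0}_p$ with dual coframe $\{\bar f^i\}$ and a unit frame $s_0$ of $K^{1/2}_p$ normalized by $s_0^2=f^1\wedge\cdots\wedge f^n$; this yields the Hermitian orthonormal basis $\{\bar f^I\otimes s_0\}_{I\subset\{1,\dots,n\}}$ of $S$. On the target side, I would use the monomial orthonormal basis of $\wedge^\cdot(T^*\otimes\mathbb{C})$ coming from a real orthonormal basis $\{e_1,\dots,e_{2n}\}$ of $T_p$.

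The argument then splits into two independent computations. First, equip $\text{End}(S)$ with the Hilbert--Schmidt inner product induced by $h$. A direct computation from the defining formula of $q$ shows that $q(\bar f^I\otimes s_0,\bar f^J\otimes s_0)$ vanishes unless $J=I^c$ and is a sign $\pm1$ otherwise; hence $q^\sharp(\bar f^I\otimes s_0\otimes\bar f^J\otimes s_0)$ is a rank-one operator of Hilbert--Schmidt norm $1$, and distinct pairs $(I,J)$ yield HS-orthogonal operators. This produces an isometry $q^\sharp\colon(S\otimes S,h\otimes h)\to(\text{End}(S),\langle\cdot,\cdot\rangle_{\textup{HS}})$. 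Second, for the Clifford--exterior identification, I would test $\mathfrak{J}\circ\rho^{-1}$ on the orthonormal Clifford monomial basis $\{e_{i_1}\cdots e_{i_p}\}_{i_1<\cdots<i_p}$. Since $e_j^\dag=e_j$ and the $e_j$'s anticommute and square to $1$, one has $\rho(e_{i_1}\cdots e_{i_p})\rho(e_{i_1}\cdots e_{i_p})^*=\rho(1)=\text{Id}_S$, whose HS norm squared equals $\dim S=2^n$, the same for every $p$; mutual orthogonality of distinct monomials reduces to the standard vanishing $\text{tr}\,\rho(c)=0$ for non-scalar Clifford elements $c$. Composing the two steps, the full map multiplies squared norms by the universal constant $2^n$, and rescaling the Hodge metric by this factor delivers the claimed isometry.

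The genuine obstacle is conceptual rather than computational: the uniformity of the rescaling constant across degrees. Applying Schur's lemma alone to the $\textup{Spin}(2n)$-equivariant isomorphism between the two decomposed Hermitian spaces would only guarantee a separate positive constant on each irreducible isotypic component of $\wedge^\cdot(T^*\otimes\mathbb{C})$. The real content of the proposition is that these constants coincide, and the Clifford monomial computation furnishes exactly this uniformity: the factor $2^n=\dim S$ is $p$-independent because $\rho$ realizes $Cl(V,g)\otimes\mathbb{C}$ as the \emph{full} endomorphism algebra of the $2^n$-dimensional space of spinors.
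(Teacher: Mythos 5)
Your proposal is correct and follows essentially the same route as the paper: both factor the map through $\textup{End}(S)$ equipped with the Hilbert--Schmidt metric, show that $q^\sharp$ is an isometry from $(S\otimes S, h\otimes h)$ onto it, and identify the uniform constant $2^n$ relating the Hilbert--Schmidt metric to the pulled-back Hodge metric. The only difference is in execution: where you verify these two facts by explicit computation in an adapted orthonormal basis and by the trace-of-Clifford-monomials argument, the paper uses the charge conjugation operator $c$ defined by $q(\phi,\cdot)=h(\phi^c,\cdot)$ for the first and an induction on $n$ for the second.
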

\begin{proof}The metric on $\wedge^\cdot (T^*\otimes\mathbb{C})$ can be pulled back to $Cl(T,g)\otimes\mathbb{C}$ through $\mathfrak{J}$. Denote this metric by $h_R$. $h_R$ can be characterized in another manner: for $a, b\in Cl(T,g)\otimes\mathbb{C}$,
\[h_R(a,b)=\textup{tr}(a^\dag b),\]
where $\textup{tr}(a)$ means taking the degree-0 part of $\mathfrak{J}(a)$. By identifying $Cl(T,g)\otimes\mathbb{C}$ with $\textup{Hom}(S)$, we have another Hermitian metric $h_R'$ on $Cl(T,g)\otimes\mathbb{C}$: for $a, b\in Cl(T,g)\otimes\mathbb{C}$,
\[h_R'(a,b)=\textup{Tr}(a^\dag b),\]
where on the right side $a, b$ are interpreted as linear operators on $S$, $a^\dag$ is the usual conjugate operator of $a$ w.r.t. the metric on $S$ and $\textup{Tr}(a)$ is the usual trace of $a$ as a linear operator on $S$. It can be shown by induction on $n$ that
\[h_R'(a,b)=2^nh_R(a,b).\]
Then to prove our claim we only have to check that
\[h'_R(q^\sharp(\phi\otimes\psi),q^\sharp(\phi\otimes\psi))=h(\phi,\phi)\times h(\psi,\psi).\]

We can define a conjugate linear operator $c:S\rightarrow S$ by \footnote{"c" is often called the charge conjugate operator in the physical literature.}
\[q(\phi, \cdot)=h(\phi^c, \cdot).\]
By choosing a local unitary frame $\{\vartheta_j\}$ of $S$, one can check easily that $c$ preserves the metric $h$. Now for any $\varphi_1, \varphi_2\in S$
\begin{eqnarray*}h(q^\sharp(\phi\otimes\psi)(\varphi_1),\varphi_2)&=&\overline{q(\psi, \varphi_1)}h(\phi,\varphi_2)=\overline{h(\psi^c,\varphi_1)}h(\phi,\varphi_2)\\
&=&h(\varphi_1, \psi^c)h(\phi,\varphi_2)=h(\varphi_1,h(\phi,\varphi_2)\psi^c). \end{eqnarray*}
This means that the conjugate of $q^\sharp(\phi\otimes\psi)$ is given by $h(\phi,\cdot)\psi^c$. Consequently,
\begin{eqnarray*}h'_R(q^\sharp(\phi\otimes\psi),q^\sharp(\phi\otimes\psi))&=&\sum_jh(q^\sharp(\phi\otimes\psi)\vartheta_j, q^\sharp(\phi\otimes\psi)\vartheta_j)\\
&=&\sum_j h(\vartheta_j, \psi^c)h(\psi^c, \vartheta_j)h(\phi,\phi)\\
&=&h(\psi^c,\psi^c)h(\phi,\phi)\\
&=&h(\psi,\psi)h(\phi,\phi).
\end{eqnarray*}
This completes our proof.
\end{proof}
In generalized geometry, forms are themselves interpreted as spinors. Actually, equipped with its natural pairing, the generalized tangent bundle $T\oplus T^*$ is a quadratic vector bundle and thus has its corresponding Clifford bundle $Cl(T\oplus T^*)$. Typically, $X+\xi\in T\oplus T^*$ acts on a form $\Phi$ by
\[(X+\xi)\cdot \Phi=\iota_X\Phi+\xi\wedge \Phi.\]
If additionally $T$ is equipped with a Riemannian metric $g$, then $Cl(T\oplus
T^*)$ can be identified with the graded tensor product
$Cl(T,g)\otimes Cl(T,-g)$ via the extension of
$$v\otimes w\rightarrow v_+\cdot w_-,$$
where $\cdot$ denotes multiplication in $Cl(T\oplus T^*)$, and $v_\pm=v\pm g(v)$.
\begin{proposition}\label{bs1}For any $v\in T$, $\phi, \psi\in S$,
$$[v\cdot\phi\otimes\psi]=v_+\cdot[\phi\otimes\psi],\quad [\phi\otimes
v\cdot\psi]=-v_-\cdot\widetilde{[\phi\otimes\psi]}.$$
\end{proposition}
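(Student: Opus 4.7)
My plan is to unwind the three-step isomorphism $[\phi\otimes\psi] = \mathfrak{J}(\rho^{-1}(q^\sharp(\phi\otimes\psi)))$ and reduce each identity to the Clifford-wedge formulas (\ref{cw}). The first step is to rewrite (\ref{cw}) in a form adapted to the notation $v_\pm = v \pm g(v)$: since $v_+\cdot\Phi = \iota_v\Phi + g(v)\wedge\Phi$ and $v_-\cdot\Phi = \iota_v\Phi - g(v)\wedge\Phi$, the content of (\ref{cw}) is precisely
\[
\mathfrak{J}(v\cdot a) = v_+\cdot\mathfrak{J}(a), \qquad \mathfrak{J}(a\cdot v) = -v_-\cdot\widetilde{\mathfrak{J}(a)}.
\]
So the two claims of the proposition are exactly the assertions that $\rho^{-1}\circ q^\sharp$ intertwines Clifford multiplication on the first spinor factor with left multiplication in $Cl(V,g)\otimes\mathbb{C}$, and on the second factor with right multiplication.

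For the first identity this is nearly automatic. Evaluating on any test spinor $\varphi$,
\[
q^\sharp(v\cdot\phi\otimes\psi)(\varphi) = q(\psi,\varphi)(v\cdot\phi) = \rho(v)\bigl(q^\sharp(\phi\otimes\psi)(\varphi)\bigr),
\]
so $q^\sharp(v\cdot\phi\otimes\psi) = \rho(v)\circ q^\sharp(\phi\otimes\psi)$; since $\rho$ is an algebra isomorphism this passes under $\rho^{-1}$ to left Clifford multiplication by $v$, and a single application of the first half of the rewritten (\ref{cw}) then yields $v_+\cdot[\phi\otimes\psi]$.

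For the second identity, the analogous computation gives $q^\sharp(\phi\otimes v\cdot\psi)(\varphi) = q(v\cdot\psi,\varphi)\,\phi$, so to convert this into right composition with $\rho(v)$ I need the Clifford invariance $q(v\cdot\psi,\varphi) = q(\psi,v\cdot\varphi)$ for $v\in T$. Using the definition $q(\cdot,\cdot) = h(c(\cdot),\cdot)$ together with $v^\dag = v$ for real $v$ (so that $h(v\cdot\alpha,\beta) = h(\alpha,v\cdot\beta)$), this invariance is equivalent to the commutation relation $c(v\cdot\psi) = v\cdot c(\psi)$, i.e., the antilinear charge conjugate intertwines real Clifford multiplication. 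I would verify this directly on a local unitary frame using the explicit formulas (\ref{cr}) and the coordinate description of $q$ in terms of $\mu$ and top-degree extraction. Once this is in hand, $q^\sharp(\phi\otimes v\cdot\psi) = q^\sharp(\phi\otimes\psi)\circ\rho(v)$, right Clifford multiplication by $v$ in $Cl(V,g)\otimes\mathbb{C}$, and the second half of the rewritten (\ref{cw}) produces exactly $-v_-\cdot\widetilde{[\phi\otimes\psi]}$.

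The main obstacle is the Clifford invariance of $q$, or equivalently $c(v\cdot\phi) = v\cdot c(\phi)$: this is the one step that is not a formal consequence of the definitions, and it is where the $\sqrt{2}$-factors in (\ref{cr}) and the parity conventions in $\mu$ must be tracked carefully. Everything else in the argument is a mechanical transport of (\ref{cw}) through the three isomorphisms $q^\sharp$, $\rho^{-1}$, $\mathfrak{J}$.
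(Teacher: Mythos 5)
Your argument is correct and follows essentially the same route as the paper: both identities are obtained by transporting left/right Clifford multiplication through $q^\sharp$, $\rho^{-1}$, $\mathfrak{J}$ and then applying Eq.~(\ref{cw}). The only difference is cosmetic: where you reduce the required invariance $q(v\cdot\psi,\varphi)=q(\psi,v\cdot\varphi)$ to the intertwining property $c(v\cdot\psi)=v\cdot c(\psi)$ of the charge conjugation operator and propose a frame computation, the paper simply invokes the standard adjunction property $q(a\cdot\phi,\theta)=q(\phi,\mu(a)\cdot\theta)$ with $\mu(v)=v$.
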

\begin{proof}Due to Eq.~(\ref{cw}),
\begin{eqnarray*}
[v\cdot\phi\otimes\psi]&=&\mathfrak{J}\circ\rho^{-1}(v\circ q^\sharp (\phi\otimes\psi))\\
&=&g(v)\wedge[\phi\otimes\psi]+\iota_v[\phi\otimes\psi]\\
&=&v_+\cdot[\phi\otimes\psi].
\end{eqnarray*}
Likewise,
\begin{eqnarray*}
[\phi\otimes
v\cdot\psi]&=&\mathfrak{J}\circ\rho^{-1}(q^\sharp(\phi\otimes\psi)\circ\mu(v))\\
&=&\mathfrak{J}\circ\rho^{-1}(q^\sharp(\phi\otimes\psi)\circ v)\\
&=&g(v)\wedge\widetilde{[\phi\otimes\psi]}-\iota_v\widetilde{[\phi\otimes\psi]}\\
&=&-v_-\cdot\widetilde{[\phi\otimes\psi]},
\end{eqnarray*}
where we have used the property that \[q(a\cdot \phi, \theta)=q(\phi, \mu(a)\cdot \theta),\quad a\in Cl(T, g), \phi, \theta\in S.\]
\end{proof}
\subsection{Spinors in GK geometry}\label{spGK}If $(M, \mathbb{J}_1, \mathbb{J}_2)$ is a GK manifold of real dimension $2n$, then we can use the underlying metric splitting to identify the Courant algebroid with $T\oplus T^*$
and $\mathcal{S}:=\wedge^\cdot (T^*\otimes\mathbb{C})$ can be viewed as a $T\oplus T^*$-spinor bundle. There is a $\wedge^{2n}T^*$-valued $\textup{Spin}_0 (2n,2n)$-invariant \footnote{$\textup{Spin}_0 (2n,2n)$ is the identity component of $\textup{Spin} (2n,2n)$.} bilinear form called \emph{Chevalley pairing}:
\[(\Phi, \Psi)_{Ch}=(\Phi\wedge \mu(\Psi))_{top}.\]
We give $M$ the orientation induced by $J_+$ and let $e_1,\cdots, e_{2n}$ be a positively oriented orthonormal local frame of $T$, then  \[\star:=e_{2n}^+\cdots e_2^+e_1^+\in \Gamma(Cl(T\oplus T^*))\] is globally well-defined, where $e_i^+=e_i+g(e_i)$. We can define an Hermitian metric on $\mathcal{S}$ by
\[(\Phi, \Psi)_hvol_g=(\Phi, \star\cdot \bar{\Psi})_{Ch},\]
which is nothing else but the usual Hodge metric on differential forms.

Since both $\mathbb{J}_1$ and $\mathbb{J}_2$ lie in $\mathfrak{so}(E)$, they can be
``quantized" to lie in $\mathfrak{spin}(E)$ and hence give
rise to a bigrading on $\mathcal{S}$
 according to distinct eigenvalues. Denote the
$(k\sqrt{-1},l\sqrt{-1})$-eigenbundle of $(\mathbb{J}_1,\mathbb{J}_2)$ by
$U_{k,l}$. Then we obtain a decomposition of
$\mathcal{S}$:
\[
\begin{array}{ccccccc}
\quad &\quad &\quad &U_{0,n} &\quad&\quad&\quad\\
\quad &\quad &\cdots &\quad&\cdots &\quad&\quad\\
\quad &U_{-n+1,1}&\quad&\quad&\quad& U_{n-1,1}&\quad\\
U_{-n,0}&\quad&\quad&\cdots&\quad&\quad& U_{n,0}\\
\quad &U_{-n+1,-1}&\quad&\quad&\quad& U_{n-1,-1}&\quad\\
\quad &\quad &\cdots &\quad&\cdots &\quad&\quad\\
\quad &\quad &\quad &U_{0,-n} &\quad&\quad&\quad
\end{array}.
\]
In particular, $U_{n,0}$ is a line bundle characterizing the $\sqrt{-1}$-eigenbulde of $\mathbb{J}_1$ in the following sense:
\[L_1=\{X+\xi\in (T\oplus T^*)\otimes \mathbb{C}|(X+\xi)\cdot \Phi=0, \forall \Phi\in U_{n,0}\}.\]
A more detailed account of the above material can be found in \cite{BCG}. A key result of \cite{BCG} is \footnote{It should be remarked that our convention is slightly different from that in \cite{BCG}. }
\begin{lemma}$\star$, when acting on $\mathcal{S}$, preserves the bigrading and in particular acts on $U_{k,l}$ by multiplying $(-1)^n\times(\sqrt{-1})^{k+l}$.
\end{lemma}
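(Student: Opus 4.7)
My plan is to show that $\star$ commutes with the spin-lifts of $\mathbb{J}_1$ and $\mathbb{J}_2$, so that $\star$ preserves the bigrading and acts as a scalar $\lambda_{k,l}$ on each $U_{k,l}$; then to propagate these scalars along the root-space decomposition of $E\otimes\mathbb{C}$; and finally to pin down the base value $\lambda_{n,0}$ using the bi-spinor realization of the canonical pure spinor of $\mathbb{J}_1$.

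\textbf{Commutation.} Since $[\mathbb{J}_1,\mathbb{J}_2]=0$ and $\mathcal{G}=-\mathbb{J}_1\mathbb{J}_2$, both $\mathbb{J}_i$ preserve the $\pm 1$-eigenspaces $C_\pm$ of $\mathcal{G}$, which are spanned by $\{e_i^+\}$ and $\{e_i^-\}$ respectively. Hence their spin-lifts lie in $\wedge^2 C_+\oplus\wedge^2 C_-$. In the Clifford algebra each $e_i^-$ is orthogonal to every $e_j^+$ and therefore anticommutes with all $2n$ factors of $\star$, so $e_i^-$ commutes with $\star$; while each $e_i^+$ anticommutes with the other $2n-1$ factors of $\star$, giving an overall sign $-1$, so $e_i^+$ anticommutes with $\star$. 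Thus $\star$ commutes with every quadratic $e_i^+e_j^+$ (two sign flips cancel) and $e_i^-e_j^-$, hence with the spin-lifts of $\mathbb{J}_1$ and $\mathbb{J}_2$.

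\textbf{Ladder propagation.} Every element of $L_+\cup\bar{L}_+$ has the form $X_+=X+gX\in C_+$ and so anticommutes with $\star$; every element of $L_-\cup\bar{L}_-$ has the form $X-gX\in C_-$ and commutes with $\star$. On the other hand, the bracket identity $\mathbb{J}_i(v\cdot\Phi)=(\mathbb{J}_i v)\cdot\Phi+v\cdot\mathbb{J}_i\Phi$ shows that Clifford multiplication by $L_+$, $L_-$, $\bar{L}_+$, $\bar{L}_-$ shifts the bigrading by $(+1,+1)$, $(+1,-1)$, $(-1,-1)$, $(-1,+1)$ respectively. Therefore, if $\Phi\in U_{k,l}$ is transported to $U_{k',l'}$ by $m_+$ applications of elements from $L_+\cup\bar{L}_+$ and $m_-$ from $L_-\cup\bar{L}_-$, then $\lambda_{k',l'}=(-1)^{m_+}\lambda_{k,l}$. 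A parity check shows $(k'+l')-(k+l)\equiv 2m_+\pmod{4}$, i.e.\ $(-1)^{m_+}=(\sqrt{-1})^{(k'+l')-(k+l)}$. Since every $U_{k,l}$ is reachable from $U_{n,0}$ by such moves, this yields $\lambda_{k,l}=\lambda_{n,0}\cdot(\sqrt{-1})^{k+l-n}$.

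\textbf{Base case.} By the bi-spinor construction of \S\ref{bi}, the pure spinor line $U_{n,0}$ is represented by $\phi_+\otimes\phi_-$, where $\phi_\pm$ are the vacuum spinors of the canonical spinor bundles built from $J_\pm$; indeed $L_+$ (the image of $T_+^{0,1}$ acting on the first factor) and $L_-$ (the image of $T_-^{0,1}$ acting on the second factor) both annihilate it. By Proposition~\ref{bs1}, $\star=e_{2n}^+\cdots e_1^+$ acts on $[\phi_+\otimes\phi_-]$ as the ordinary Clifford volume element $e_{2n}\cdots e_1$ acting on $\phi_+$. Using $\gamma=(\sqrt{-1})^ne_1\cdots e_{2n}$ and the reversal sign $(-1)^{n(2n-1)}=(-1)^n$, this equals $(\sqrt{-1})^n\gamma\phi_+$. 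The main obstacle is the chirality of $\phi_+$: writing $\gamma=\prod_j(\sqrt{-1}\,e_{2j-1}e_{2j})$ in a unitary complex basis and noting that each factor acts as $\bar{f}^j\wedge\iota_{\bar{f}_j}-\iota_{\bar{f}_j}\bar{f}^j\wedge$, one sees that each factor acts as $-1$ on the degree-$0$ vacuum, so $\gamma\phi_+=(-1)^n\phi_+$. Hence $\lambda_{n,0}=(-1)^n(\sqrt{-1})^n$, and combined with the propagation formula this gives $\lambda_{k,l}=(-1)^n(\sqrt{-1})^{k+l}$ as desired.
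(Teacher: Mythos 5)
Your argument is correct, and it is worth pointing out that the paper itself gives no proof of this lemma: it is simply quoted from \cite{BCG}, with a footnote about a difference in conventions. What you have written is therefore a self-contained verification adapted to the conventions actually used here, and its three steps check out: since $\mathbb{J}_1,\mathbb{J}_2$ commute with $\mathcal{G}$ their spin lifts lie in $\wedge^2C_+\oplus\wedge^2C_-$, and your sign count ($e_i^+$ anticommutes with $\star$, $e_i^-$ commutes) shows $\star$ preserves each $U_{k,l}$; the ladder argument correctly gives $\lambda_{k',l'}=(\sqrt{-1})^{(k'+l')-(k+l)}\lambda_{k,l}$; and the base value $\lambda_{n,0}=(-1)^n(\sqrt{-1})^n$ is right, because $e_{2n}\cdots e_1=(\sqrt{-1})^n\gamma$ and, with $\rho(\bar f_i)=\sqrt{2}\iota_{\bar f_i}$, $\rho(f_i)=\sqrt{2}\bar f^i\wedge$ and the orientation induced by $J_+$, each factor $\sqrt{-1}\,e_{2j-1}e_{2j}$ of $\gamma$ acts by $-1$ on the vacuum line; the resulting eigenvalue also matches the paper's later use $\star\cdot\bar\Phi_\alpha=(-\sqrt{-1})^{-n}\bar\Phi_\alpha$ on $U_{-n,0}$. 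Two small points deserve a sentence in a polished version. First, the propagation step needs that $U_{k,l}$ is \emph{spanned} by Clifford products applied to $U_{n,0}$, not merely that it is reachable; this is the standard generation fact $U_{n-p-q,\;q-p}=\wedge^p\bar L_+\cdot\wedge^q\bar L_-\cdot U_{n,0}$ (equivalently $\mathcal{S}=\wedge^\cdot\bar L_1\cdot U_{n,0}$) and should be stated or cited. Second, your base case borrows the identification of $U_{n,0}$ with the image of $K_+^{1/2}\otimes K_-^{1/2}$, which in the paper appears only in \S~\ref{spGK}, after this lemma; since that identification is proved purely from the annihilation of $q^\flat(K_+^{1/2}\otimes K_-^{1/2})$ by $L_1$ and does not use the present lemma, there is no circularity, and because the statement is fibrewise linear algebra the spin assumption implicit in using $K_\pm^{1/2}$ costs nothing.
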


 Another crucial and well-known fact we must mention here is that $U_{n,0}$ acquires a natural $\mathbb{J}_1$-GH structure $\mathrm{D}$ from the action of the twisted de Rham operator $d_H=d-H\wedge$. In particular, $\mathrm{D}=\bar{\delta}_++\bar{\delta}_-$ according to the decomposition $L_1=L_+\oplus L_-$.

Now we assume that $M$ is spin and we then have two complex structures $J_\pm$ to construct a spinor bundle in the way described in \S~\ref{bi}:
\[S_1=\oplus_{p}(\wedge^pT^*_{+0,1}\otimes K_+^{1/2}),\quad S_2=\oplus_{p}(\wedge^pT^*_{-0,1}\otimes K_-^{1/2})\]
where $T^*_{\pm0,1}\subset T^*\otimes \mathbb{C}$ are the $-\sqrt{-1}$-eigenbundle of $J_\pm^*$ respectively. Note that the canonical orientations determined by $J_\pm$ need not to agree. These spinor bundles of course both acquire an Hermitian metric from the same Riemannian metric $g$ and a compatible spin connection by lifting $\nabla^-$. There is a $Cl(T, g)$-equivarriant isometry $p$ from $S_2$ to $S_1$, intertwining the spin connections. Such an isometry is unique up to an $S^1$-factor. Combining facts stated here and that in \S~\ref{bi}, we are led to an isomorphism
\[q^\flat:=\mathfrak{J}\circ \rho^{-1}\circ q^\sharp \circ(\textup{Id}\otimes p): S_1\otimes S_2\cong \mathcal{S}, \quad \phi\otimes \psi \mapsto q^\sharp(\phi\otimes p(\psi)).\]
\begin{proposition}The image of $K_+^{1/2}\otimes K_-^{1/2}\subset S_1\otimes S_2$ under the map $q^\flat$ is precisely $U_{n,0}$.
\end{proposition}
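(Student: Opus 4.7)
The plan is to combine two structural facts: the spinorial characterization of $U_{n,0}$ as the common annihilator of $L_1$ inside $\mathcal{S}$, and the pure-spinor description of the canonical lines, namely $T_\pm^{0,1}\cdot K_\pm^{1/2}=0$ inside $S_{1}$ and $S_{2}$ respectively. Since both $q^\flat(K_+^{1/2}\otimes K_-^{1/2})$ and $U_{n,0}$ are line bundles and the preceding proposition shows $q^\flat$ to be a fibrewise isomorphism, it suffices to verify the inclusion $q^\flat(K_+^{1/2}\otimes K_-^{1/2})\subset U_{n,0}$; equality then follows by a rank count.

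To verify the inclusion I would fix local sections $\phi\in K_+^{1/2}$ and $\psi\in K_-^{1/2}$ and show that $L_1=L_+\oplus L_-$ annihilates $q^\flat(\phi\otimes\psi)$. For the $L_+$-part, take $X_+\in T_+^{0,1}$; applying the first identity of Prop.~\ref{bs1} (extended $\mathbb{C}$-linearly in the vector argument) together with $X_+\cdot\phi=0$ gives
\[(X_++g(X_+))\cdot q^\flat(\phi\otimes\psi)=[X_+\cdot\phi\otimes p(\psi)]=0,\]
which is exactly $L_+$-annihilation since $L_+=(\mathrm{Id}+g)T_+^{0,1}$. For the $L_-$-part I would exploit the $Cl(T,g)$-equivariance of $p$ to transport $X_-\cdot\psi=0$ into $X_-\cdot p(\psi)=p(X_-\cdot\psi)=0$, and then invoke the second identity of Prop.~\ref{bs1} to obtain
\[(X_--g(X_-))\cdot\widetilde{q^\flat(\phi\otimes\psi)}=-[\phi\otimes X_-\cdot p(\psi)]=0.\]
Because any homogeneous element of $T\oplus T^*$ acts on $\mathcal{S}$ by an operator that reverses $\mathbb{Z}/2$-grading, the relation $v\cdot\widetilde{\Phi}=0$ is equivalent to $v\cdot\Phi=0$, so this in turn yields $L_-$-annihilation of $q^\flat(\phi\otimes\psi)$.

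The points requiring some vigilance, none of which I expect to be a real obstacle, are: (i) the parity twist $\widetilde{\cdot}$ in the second identity of Prop.~\ref{bs1} must be disposed of by the parity-reversal observation just used, rather than ignored; (ii) the isometry $p$ intertwines only the common $Cl(T,g)$-action, not any $J_\pm$-dependent structure, so the transfer $X_-\cdot p(\psi)=p(X_-\cdot\psi)$ must be applied precisely where that equivariance holds; (iii) the canonical orientations induced by $J_+$ and $J_-$ need not coincide, but since $p$ is only required to be a $Cl(T,g)$-equivariant isometry (unique up to an $S^1$-factor), the choice of such $p$ does not affect the resulting line bundle. Modulo these bookkeeping matters, the essential content sits inside Prop.~\ref{bs1} together with the standard pure-spinor description of $K_\pm^{1/2}$.
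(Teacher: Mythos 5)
Your proposal is correct and follows essentially the same route as the paper: characterize $U_{n,0}$ as the line annihilated by $L_1=L_+\oplus L_-$, whose typical elements are $X\pm g(X)$ with $X\in T_\pm^{0,1}$, and apply the two identities of Prop.~\ref{bs1}. The paper leaves the verification as "easy to check"; your handling of the parity twist $\widetilde{\cdot}$ and of the $Cl(T,g)$-equivariance of $p$ simply fills in exactly those details.
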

\begin{proof}Since $U_{n,0}$ is characterized by the property that it is annihilated by all elements in $L_1$, we only have to prove that $L_1$ also annilhilates $q^\flat(K_+^{1/2}\otimes K_-^{1/2})$. This is easy to check if one notes that $L_1=L_+\oplus L_-$ where a typical element in $L_\pm$ is of the form $X\pm g(X)$ for $X\in T_{\pm}^{0,1}$, and applies Prop.~\ref{bs1} to the present situation.
\end{proof}

The line bundle $K_+^{1/2}\otimes K_-^{1/2}$ has a biholomorphic structure: $K_+^{1/2}$ is $J_+$-holomorphic in a canonical way while the $(0,1)$-part of the Bismut connection $\nabla^-$ w.r.t. $J_+$ also gives rise to a $J_+$-holomorphic structure on $K_-^{1/2}$. Thus $K_+^{1/2}\otimes K_-^{1/2}$ acquires a $J_+$-holomorphic structure. In a similar way, it also acquires a $J_-$-holomorphic structure. The following fact was established in \cite{Gu3}.
\begin{proposition}The biholomorphic structure on $K_+^{1/2}\otimes K_-^{1/2}$ described above is actually a $\mathbb{J}_1$-GH structure, which is intertwined with the $\mathbb{J}_1$-GH structure on $U_{n,0}$ by the restriction of the map $q^\flat$.
\end{proposition}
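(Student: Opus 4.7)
The plan is to reduce the proposition to showing that $q^\flat$ intertwines the two operators: the $\mathbb{J}_1$-GH structure $\mathrm{D}_{GH}=\bar{\delta}_+^{GH}+\bar{\delta}_-^{GH}$ on $U_{n,0}$, defined by projecting $d_H$ to its $\bar{L}_+\otimes U_{n,0}$ and $\bar{L}_-\otimes U_{n,0}$ components via the inverse Clifford action, versus the biholomorphic operator $\bar{\delta}_++\bar{\delta}_-$ on $K_+^{1/2}\otimes K_-^{1/2}$ constructed above from the Bismut connections. Once the intertwining is known, the compatibility (\ref{ghc}) on the bi-spinor side is automatic, since it already holds on $U_{n,0}$. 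Because both operators are first-order and the line bundles are locally trivial, it suffices to verify the intertwining on a local unitary section $\Omega := q^\flat(s_+\otimes s_-)$ obtained from unitary sections $s_\pm$ of $K_\pm^{1/2}$.

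The key computational bridge is the Bismut-type identity, valid on $\mathcal{S}$ in any local orthonormal frame $\{e_i\}$,
\[
d_H \;=\; \sum_i e_i^+\cdot \tilde{\nabla}^-_{e_i} \;=\; \sum_i e_i^-\cdot \tilde{\nabla}^+_{e_i},
\]
where $\tilde{\nabla}^\pm$ are the spin lifts of the Bismut connections $\nabla^\pm$ to $\mathcal{S}$. Applying Prop.~\ref{bs1}, the action of $e_i^+$ on $q^\flat(\phi\otimes\psi)$ equals $q^\flat(e_i\cdot\phi\otimes\psi)$, and since the $Cl(T,g)$-equivariant isometry $p:S_2\to S_1$ intertwines the spin lifts of $\nabla^-$, the operator $\tilde{\nabla}^-$ transports through $q^\flat$ into a Leibniz-type operator on $S_1\otimes S_2$ acting through the spin lifts on each factor. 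Projecting $d_H\Omega$ onto $\bar{L}_+\otimes U_{n,0}$ is equivalent on the bi-spinor side to restricting the sum to directions $e_i\in T^{1,0}_+$: this isolates the canonical $J_+$-$\bar{\partial}$ of $s_+$ together with the $(0,1)^{J_+}$-part of $\nabla^-s_-$, which is exactly the recipe used to define $\bar{\delta}_+$ on $K_+^{1/2}\otimes K_-^{1/2}$. The $\bar{L}_-$-component is handled symmetrically via the second form of the Bismut identity and the second identity of Prop.~\ref{bs1}.

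The main obstacle is sign and convention bookkeeping. One must verify that the torsion term $\pm\frac{1}{2}g^{-1}H$ in $\nabla^\pm$ precisely reproduces the $-H\wedge$ twist in $d_H$ after the Clifford contraction, that the parity sign $\widetilde{\cdot}$ in the second identity of Prop.~\ref{bs1} combines consistently with the chirality factor $(-1)^n(\sqrt{-1})^{k+l}$ by which $\star$ acts on $U_{k,l}$, and that the $S^1$-ambiguity in the choice of $p$ cancels on the tensor product $K_+^{1/2}\otimes K_-^{1/2}$. These verifications are local and routine once set up in a normal frame; the conceptual content is entirely carried by the Bismut identity together with Prop.~\ref{bs1}.
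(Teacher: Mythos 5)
Your overall strategy is the same as the paper's (transport everything through $q^\flat$, express $d_H$ via a connection, project onto the $\bar{L}_\pm$-components, and note that Eq.~(\ref{ghc}) is inherited from $U_{n,0}$), but your ``key computational bridge'' is false as stated, and it is exactly the step where the real work lies. The identity $d_H=\sum_i e_i^+\cdot\tilde{\nabla}^-_{e_i}$ cannot hold on $\mathcal{S}$: the right-hand side is a Dirac-type operator, since $e_i^+\cdot=\iota_{e_i}+e^i\wedge$ contains a contraction, so already on a $1$-form it produces a nonzero degree-$0$ term (a codifferential plus an $H$-contraction), whereas $d_H\alpha=d\alpha-H\wedge\alpha$ has none. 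What is true, and what the paper uses, is an expression of $d_H$ through the \emph{generalized} spin connection $\nabla^{sp}$ (built from $\nabla^+$ on $V_+$ and $\nabla^-$ on $V_-$, which is the connection that corresponds under $q^\flat$ to the tensor product $\nabla^+\otimes 1+1\otimes\nabla^-$ on $S_1\otimes S_2$), with Clifford action by the covectors $e^i$ alone and explicit cubic torsion corrections $\frac14\sum H_{ijk}(\frac13 e^ie^je^k+e^ie_je_k)$. Your $\tilde{\nabla}^-$ (a lift of a single Bismut connection to all of $\mathcal{S}$) is also not the right object: it does not preserve the $U_{k,l}$-decomposition and is not the connection that transports through $q^\flat$ to a Leibniz operator on $S_1\otimes S_2$ with $\nabla^+$ on the first factor and $\nabla^-$ on the second, which is what your projection argument needs.

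A second, smaller underestimate: identifying the $\bar{L}_+$-component of $d_H$ on bi-spinors with the \emph{canonical} $J_+$-holomorphic structure on $K_+^{1/2}$ (tensored with the $(0,1)^{J_+}$-part of $\nabla^-$ on $K_-^{1/2}$) is not mere sign bookkeeping. After restricting to $U_{n,0}$ one is left with residual $H$-trace terms of the form $\overline{H^+_{ik\bar{k}}}$, and showing that $\nabla^+$ corrected by these terms is precisely the Chern connection of $K_+^{1/2}$ requires the comparison formula between the Chern and Bismut connections, $(\nabla^c_XY,Z)=(\nabla^+_XY,Z)-\frac12 H(X,J_+Y,J_+Z)-\frac12 H(X,Y,Z)$, which the paper invokes explicitly. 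With the bridge identity corrected to the $\nabla^{sp}$-formula and this Chern--Bismut step supplied, your outline becomes essentially the paper's proof; as written, the central identity is wrong and the argument does not go through.
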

\begin{proof}Note that $T\oplus T^*=V_+\oplus V_-$, where $V_\pm=(\textup{Id}\pm g)T$. We can $T$ identify with $V_\pm$ via projection and transport $\nabla^\pm$ to $V_\pm$ respectively. In this way, $T\oplus T^*$ is equipped with a connection which can be lifted to a spin connection $\nabla^{sp}$ on $\mathcal{S}$. If $\{e_i\}$ is a local orthonormal frame and $\{e^i\}$ its dual, then for $\Phi\in \Gamma(\mathcal{S})$,
\[\nabla_{e_i}^{sp} \Phi=\nabla^{LC}_{e_i}\Phi-\frac{1}{4}\sum_{j,k}H_{ijk}(e_je_k+e^je^k)\cdot \Phi.\]

Alternatively, this (generalized) spin connection can be interpreted from the bi-spinor side: We can equip $S_1$ with the spin connection lifted from $\nabla^+$ and $S_2$ with the spin connection lifted from $\nabla^-$. Then through the isomorphism $q^\flat$, this tensor product of connections gives rise to a connection on $\mathcal{S}$, which is precisely $\nabla^{sp}$. This can be easily checked by using Prop.~\ref{bs1}.

$\nabla^{sp}$ can be used to express the twisted de Rham operator $d_H$, just as in ordinary Riemannian geometry the de Rham operator can be conveniently expressed in terms of Levi-Civita connection. Actually, for $\Phi\in \Gamma(\mathcal{S})$,
\[d_H \Phi=\sum_ie^i\cdot\nabla^{sp}_{e_i}\Phi+\frac{1}{4}\sum_{i,j,k}H_{ijk}(\frac{1}{3}e^ie^je^k+e^ie_je_k)\cdot \Phi,\]
where $H_{ijk}=H(e_i, e_j, e_k)$. When restricted on $U_{n,0}$, the above formula will give an explicit expression of the GH structure in terms of $\nabla^{sp}$. More precisely, if $\{f^\pm_i\}$ are local unitary frames of $T_\pm^{1,0}$ respectively and $\Phi\in \Gamma(U_{n,0})$, then
\begin{equation}\bar{\delta}_+\Phi=\sum_i\frac{1}{2}(f_i^++g(f_i^+))\cdot\nabla^{sp}_{\bar{f}_i^+}\Phi-\frac{1}{4}\sum_{i,k}\overline{H^+_{ik\bar{k}}}(f_i^++g(f_i^+))\cdot \Phi \label{del}\end{equation}
and
\[\bar{\delta}_-\Phi=\frac{1}{2}\sum_i(-f_i^-+g(f_i^-))\cdot\nabla^{sp}_{\bar{f}_i^-}\Phi+\frac{1}{4}\sum_{i,k}\overline{H^-_{ik\bar{k}}}(-f_i^++g(f_i^-))\cdot \Phi\]
where $H^\pm_{ik\bar{k}}=H(f_i^\pm, f_k^\pm, \bar{f}_k^\pm)$.

Now let $\phi\in \Gamma(K_+^{1/2})$ and $\psi\in \Gamma(K_-^{1/2})$. Then $[\phi\otimes \psi]\in \Gamma(U_{n,0})$ and the right side of Eq.~(\ref{del}) is
\begin{equation}\frac{1}{2}\sum_i(f_i^++g(f_i^+))\cdot\{[(\nabla^+_{\bar{f}_i^+}-\frac{1}{2}\overline{H^+_{ik\bar{k}}})\phi\otimes \psi]+[\phi\otimes \nabla_{\bar{f}_i^+}^-\psi]\}.\label{messy}\end{equation}
If we can prove that the natural $J_+$-holomorphic structure on $K_+^{1/2}$ takes the following form
\begin{equation}\bar{\partial}\phi=\sum_ig(f_i^+)\wedge(\nabla^+_{\bar{f}_i^+}-\frac{1}{2}\overline{H^+_{ik\bar{k}}})\phi,\label{holo}\end{equation}
then Eq.~(\ref{del}) and Eq.~(\ref{messy}) precisely mean that $q^\flat$ intertwines the $J_+$-holomorphic structures on $K_+^{1/2}\otimes K_-^{1/2}$ and $U_{n,0}$. Eq.~(\ref{holo}) does hold: We can consider the Chern connection $\nabla^c$ on $T_+^{1,0}$ w.r.t. the natural $J_+$-holomorphic structure. $\nabla^c$ and $\nabla^+$ are related by (see \cite{Al} for example)
$$(\nabla^c_XY,Z)=(\nabla^+_XY,Z)-\frac{1}{2}H(X,J_+Y,J_+Z)-\frac{1}{2}H(X,Y,Z).$$
This identity is enough to verify Eq.~(\ref{holo}).

Similarly, $q^\flat$ intertwines the $J_-$-holomorphic structures on $K_+^{1/2}\otimes K_-^{1/2}$ and $U_{n,0}$. Since $\mathrm{D}=\bar{\delta}_++\bar{\delta}_-$, then \[(q^\flat)^{-1}\mathrm{D}q^\flat=(q^\flat)^{-1}\bar{\delta}_+q^\flat+(q^\flat)^{-1}\bar{\delta}_-q^\flat\] is a GH structure on $K_+^{1/2}\otimes K_-^{1/2}$.
\end{proof}
\emph{Remark}. Since $K_+\otimes K_-$ is the square of $K_+^{1/2}\otimes K_-^{1/2}$ and $K_+^{-1}\otimes K_-^{-1}$ is the dual of $K_+\otimes K_-$, $K_+\otimes K_-$ and $K_+^{-1}\otimes K_-^{-1}$ are thus GH line bundles in the natural way. This finally proves Prop.~\ref{holo2}.
\section{Scalar curvature for GK structures of symplectic type}
\subsection{GK structures of symplectic type and toric examples}\label{sym}
Recall that for the GK pair $(\mathbb{J}_1, \mathbb{J}_2)$ if $\mathbb{J}_2$ is a B-transform of a GC structure $\mathbb{J}_\omega$ induced from a symplectic form $\omega$, the GK manifold $(M, \mathbb{J}_1, \mathbb{J}_2)$ is said to be \emph{of symplectic type}. It is known from \cite{En} that for a given symplectic manifold $(M, \omega)$, compatible GC structures $\mathbb{J}_1$ which, together with a B-transform of $\mathbb{J}_\omega$, form GK structures on $M$ are in \emph{one-to-one} correspondence with \emph{tamed} integrable complex structures $J_+$ on $M$ whose \emph{symplectic adjoint} $J_+^{\omega}:=-\omega^{-1}J_+^*\omega$ is also integrable. This correspondence greatly facilitates the study of such structures. Precisely, if \emph{in the metric splitting} we set
\[\mathbb{J}_2=\frac{1}{2}\left(
  \begin{array}{cc}
    -J_++J_-& \omega_+^{-1}+\omega_-^{-1} \\
    -\omega_+-\omega_- & J_+^*-J_-^* \\
  \end{array}
\right)=\left(\begin{array}{cc}
1 & 0\\
-b & 1\\\end{array}\right)\left(\begin{array}{cc}
0 & \omega^{-1}\\
-\omega & 0\\\end{array}\right)\left(\begin{array}{cc}
1 & 0\\
b & 1\\\end{array}\right),\]
then the following basic identities can be easily obtained:
\begin{equation}J_-=J_+^\omega=-\omega^{-1} J_+^*\omega,\quad g=-\frac{1}{2}\omega (J_++J_-),\quad b=-\frac{1}{2}\omega (J_+-J_-).\label{sy}\end{equation}
In particular, the curvature of the metric splitting is $H=db$.

For a GK manifold of symplectic type, another natural splitting is also often used in the literature. In this splitting, the $\sqrt{-1}$-eigenbundle of $\mathbb{J}_2$ is of the form $(\textup{Id}+\sqrt{-1}\omega)(T\otimes \mathbb{C})$, or the generalized pure spinor of $\mathbb{J}_2$ is $e^{-\sqrt{-1}\omega}$. We call this splitting the \emph{symplectic splitting}. Obviously, it relates to the metric splitting by the 2-form $b$ and in the latter the generalized pure spinor is changed into $e^{b-\sqrt{-1}\omega}$.

There are many examples of GK manifolds of symplectic type coming from toric geometry.

\begin{definition}A toric symplectic manifold $(M, \omega, \mathbb{T}, \mu)$ of dimension $2n$ is a compact connected symplectic manifold $(M, \omega)$ with an effective and Hamiltonian action of the $n$-dimensional torus $\mathbb{T}=\mathbb{T}^n$ (with Lie algebra $\mathfrak{t}$). Note that here $\mu: M\rightarrow \mathfrak{t}^*$ is the moment map.
\end{definition}
Let $P$ be the image of $\mu$. Then by the famous convexity theorem proved by Atiyah \cite{At} and Guillemin-Sterberg \cite{GS}, $P$ is a polytope which is commonly called the moment polytope of the Hamiltonian action. T. Delzant proved that compact toric symplectic manifolds are actually classified by their moment polytopes \cite{Del}. A byproduct of Delzant's theorem is that any toric symplectic manifold admits a compatible invariant K$\ddot{a}$hler structure. For a fixed toric symplectic manifold, Guillemin found in \cite{Gul} that a compatible toric K$\ddot{a}$hler structure can be efficiently described by a strictly convex smooth function $\tau$ defined in the interior $\mathring{P}$ of $P$. This function is called the \emph{symplectic potential} in the literature. Through the works \cite{Bou, Wang1, Wang2}, the main body of Guillemin's theory has been extended to the setting of toric GK structures of symplectic type---these are GK structures whose underlying $J_+$ is invariant under the torus action.

Let $\{t_i\}$ be a fixed basis of $\mathfrak{t}$ and $X_i$ the corresponding fundamental vector fields over $M$. A general toric GK structure of symplectic type on $(M, \omega, \mathbb{T}, \mu)$ can be viewed as constructed in the following steps.

\emph{Firstly} a compatible toric K$\ddot{a}$hler structure on $M$ is chosen. Let $I$ be the underlying invariant complex structure and $\tau$ its associated symplectic potential---a strictly convex function on $\mathring{P}$ satisfying certain asymptotic conditions when approaching the boundary of $P$. The vector fields $\{X_i, IX_i\}$ form an invariant frame of $T\mathring{M}$ where $\mathring{M}=\mu^{-1}(\mathring{P})\cong \mathbb{T}\times \mathring{P}$. Let $\{\zeta_i, u_i\}$ be the corresponding dual frame of $T^*\mathring{M}$. Then in terms of $\zeta_i, d\mu_i$, $I$ takes the following form (we have written $\zeta_i$ and $d\mu_i$ in a column)
\[I^*\left(
       \begin{array}{c}
         \zeta \\
         d\mu \\
       \end{array}
     \right)=\left(
               \begin{array}{cc}
                 0 & \phi_s \\
                 -(\phi_s)^{-1} & 0 \\
               \end{array}
             \right)\left(
       \begin{array}{c}
         \zeta \\
         d\mu \\
       \end{array}
     \right),
\]
where $\phi_s$ is the Hessian of $\tau$. In particular, the K$\ddot{a}$hler metric is of the form \[g_0=(\phi_s)^{-1}_{ij}\zeta_i\otimes \zeta_j+\phi_{sij}d\mu_i\otimes d\mu_j.\] Note that $\{\zeta_i\}$ is actually a flat connection on the trivial torus bundle $\mu: \mathring{M}\rightarrow \mathring{P}$ and thus locally $\zeta_i=d\theta_i$ for local functions $\theta_i$. Actually, $\{\theta_i, \mu_i\}$ provides a Darboux coordinate chart for $\omega$, i.e. locally $\omega= d\mu_i\wedge d\theta_i$. Such a coordinate chart is also called \emph{admissible} by Boulanger in \cite{Bou}.

\emph{Secondly}, choose two constant anti-symmetric $n\times n$ real matrices $C, F$ such that the matrix-valued function
\[\textup{I}+\frac{1}{4}[(\phi_s)^{-1/2}F(\phi_s)^{-1/2}]^2\]
is positive-definite on $P$, where $\textup{I}$ is the identity matrix. One should note that no further requirement for $C$ is needed here.

\emph{Thirdly}, the matrix $F$ is used to construct two other flat connections $\zeta^\pm$:
\[\zeta^\pm=\zeta\mp\frac{1}{2}Fd\mu.\]
Then two invariant complex structures $J_\pm$ can be specified on $\mathring{M}$ if we insist
\[J_+^*\left(
       \begin{array}{c}
         \zeta^+ \\
         d\mu \\
       \end{array}
     \right)=\left(
               \begin{array}{cc}
                 0 & \phi^T \\
                 -(\phi^T)^{-1} & 0 \\
               \end{array}
             \right)\left(
       \begin{array}{c}
         \zeta^+ \\
         d\mu \\
       \end{array}
     \right)\]
     and
     \[J_-^*\left(
       \begin{array}{c}
         \zeta^- \\
         d\mu \\
       \end{array}
     \right)=\left(
               \begin{array}{cc}
                 0 & \phi \\
                 -\phi^{-1} & 0 \\
               \end{array}
             \right)\left(
       \begin{array}{c}
         \zeta^- \\
         d\mu \\
       \end{array}
     \right)\]
     where $\phi=\phi_s+C$ and $\phi^T$ is the transpose of $\phi$. Both $J_+$ and $J_-$ thus defined can be smoothly extended to the whole of $M$. They together with the symmetric part of $-\omega J_+$ form the biHermitian triple defining a GK structure.

     Actually it was proved in \cite{Wang1, Wang2} that any toric GK structure of symplectic type arises in the manner described as above. Note that if $F=0$, then the underlying GK structure is called \emph{anti-diagonal}.

     For later convenience, we collect from \cite{Wang2} the matrix forms of several structures viewed as linear maps in the frame  $\{\partial_{\theta_i}, \partial_{\mu_i}\}$:
     \[J_+\sim \left(
            \begin{array}{cc}
              \phi^{-1}F/2 & -\phi^{-1} \\
              \phi+1/4F\phi^{-1}F & -F\phi^{-1}/2 \\
            \end{array}
          \right),\quad J_-\sim \left(
            \begin{array}{cc}
              -(\phi^T)^{-1}F/2& -(\phi^T)^{-1} \\
              \phi^T+1/4F(\phi^T)^{-1}F & F(\phi^T)^{-1}/2 \\
            \end{array}
          \right),\]
\[g\sim\left(
         \begin{array}{cc}
           (\phi^{-1})_s & (\phi^{-1})_aF/2 \\
            F(\phi^{-1})_a/2& \phi_s+\frac{1}{4}F(\phi^{-1})_sF \\
         \end{array}
       \right),\quad b\sim \left(
                             \begin{array}{cc}
                               (\phi^{-1})_a & (\phi^{-1})_sF/2 \\
                               F(\phi^{-1})_s/2 & \phi_a+\frac{1}{4}F(\phi^{-1})_aF \\
                             \end{array}
                           \right).
\]
          \[(\frac{J_++J_-}{2})^{-1}\sim \left(
       \begin{array}{cc}
         \frac{1}{2}\Xi^{-1}F\phi_0 & \Xi^{-1} \\
         -\phi^T(\phi_s)^{-1}\phi+\frac{1}{4}\phi_0^TF\Xi^{-1}F\phi_0 & \frac{1}{2}\phi_0^TF\Xi^{-1} \\
       \end{array}
     \right),\]
     where $\phi_0=(\phi_s)^{-1}\phi_a$ and we denote $C$ by $\phi_a$ to emphasize that it is the anti-symmetric part of $\phi$.
\subsection{Goto's approach to scalar curvature and its refinement}\label{goto}
In \cite{Go1}, Goto provided a notion of scalar curvature for GK manifolds of symplectic type in terms of generalized pure spinors defining the underlying geometry. The goal of this section is thus two-fold: on one side we briefly recall Goto's definition and on the other side we refine it in such a way that, with the help of bi-spinors, we can compute the scalar curvature in terms of the underlying biHermitian data.

  In \cite{Go1}, Goto actually used a different splitting which was neither the metric splitting nor the symplectic splitting. The generalized pure spinor of $\mathbb{J}_2$ was chosen to be $\Psi=e^{B-\sqrt{-1}\omega}$ where $B$ is a closed real 2-form. In this setting, let $\Phi_\alpha$ be a local frame of $U_{n,0}$. Then it is well-known that
\[d\Phi_\alpha=\eta_\alpha\cdot \Phi_\alpha\]
for some real generalized vector field $\eta_\alpha$, which is uniquely determined by the $\mathbb{J}_1$-GH structure on $U_{n,0}$ and the choice of $\Phi_\alpha$. Let $\rho_\alpha$ be the real function\footnote{$\rho_\alpha$ is actually positive because $\Phi_\alpha$ and $\Psi$ give the same orientation on $M$.} defined by
\begin{equation}(\Phi_\alpha, \bar{\Phi}_\alpha)_{Ch}=\rho_\alpha(\Psi, \bar{\Psi})_{Ch}.\label{metric}\end{equation}
Then we have the differential form
\[d[(-\mathbb{J}_1\eta_\alpha+\frac{1}{2}\mathbb{J}_1d\log \rho_\alpha)\cdot \Psi]=-(P_1+\sqrt{-1}P_2)\Psi\]
where $P_1$, $P_2$ are real closed 2-forms. This expression is actually globally well-defined. Goto called $P_1$ the (generalized) Ricci form and
\begin{equation}\kappa:=\frac{2nP_1\wedge \omega^{n-1}}{\omega^n}\label{sc}\end{equation}
the (generalized) scalar curvature. Later in \cite{Go2}, the above seemingly strange expression of Ricci curvature was interpreted in terms of generalized connection. Actually, the above data $\rho_\alpha$ gives rise to an Hermitian metric $h_\omega$ on $U_{n,0}$. This metric and the $\mathbb{J}_1$-GH structure determine a canonical generalized connection $D^\omega$. The generalized vector field
\[\sigma_\alpha=-\sqrt{-1}(-\mathbb{J}_1\eta_\alpha+\frac{1}{2}\mathbb{J}_1d\log \rho_\alpha)\]
is precisely the connection form of $D^\omega$ in the normalized local frame $\Phi_\alpha/\sqrt{\rho_\alpha}$.

We shall give here a refinement of Goto's definition of Ricci curvature in terms of ordinary connections.

\emph{Firstly}, we would like to drop the real 2-form $B$ and thus use the symplectic splitting. From the viewpoint of biHermitian geometry, this freedom parameterized by $B$ only arises when one reformulates the geometry using the language of GC geometry and thus can be viewed as a gauge freedom. By setting $B=0$, we see that $\sqrt{-1}P_1$ is nothing else but the differential of the 1-form part $\sigma_\alpha'$ of the generalized vector field $\sigma_\alpha$ because the vector filed part of $\sigma_\alpha$ only contributes to $P_2$. However, $\sigma_\alpha'$ is just the connection 1-form of the ordinary connection part $\nabla^\omega$ of $D^\omega$ in the symplectic splitting and in the local frame $\Phi_\alpha/\sqrt{\rho_\alpha}$.

Note that $U_{n,0}$ inherits another Hermitian metric from the Hodge metric on $\mathcal{S}$. To distinguish this metric with that defined by Eq.~(\ref{metric}), we call the latter the \emph{symplectic metric}.
To summarize our argument up to now, we have
\begin{definition}Let $D^\omega$ be the canonical generalized connection on $U_{n,0}$ determined by the natural $\mathbb{J}_1$-GH structure and the symplectic metric $h_\omega$, and let $\nabla^\omega$ be the ordinary connection part of $D^\omega$ in the symplectic splitting. Denote the curvature of $\nabla^\omega$ by $R^\omega$. We call $P_1=-\sqrt{-1}R^\omega$ the (symplectic) Ricci form and call the function $\kappa$ determined by Eq.~(\ref{sc}) the (symplectic) scalar curvature.
\end{definition}
\emph{Remark}. Of course, the Ricci form and the scalar curvature can be defined in any splitting in a similar way and thus they both are generally splitting-relevant.

\emph{Secondly}, from the discussion of \S~\ref{spGK}, we can identify $K_+^{1/2}\otimes K_-^{1/2}$ with $U_{n,0}$ via the map $q^\flat$. Even more, this map intertwines GH structures and metrics induced from $g$ on these two line bundles. If we can further know how the Hodge metric and the symplectic metric on $U_{n,0}$ are related, it is then possible to express the scalar curvature in terms of the biHermitian data directly. Note that from arguments in \S~\ref{spGK}\[\star\cdot \bar{\Phi}_\alpha=(-\sqrt{-1})^{-n}\bar{\Phi}_\alpha,\quad (\Psi, \bar{\Psi})_{Ch}=(-2\sqrt{-1})^nvol_\omega,\]
where $vol_\omega$ is the Liouville volume element $\omega^n/n!$.
Then Eq.~(\ref{metric}) is precisely
\[(-\sqrt{-1})^n(\Phi_\alpha, \Phi_\alpha)_hvol_g=\rho_\alpha\times (-2\sqrt{-1})^nvol_\omega,\]
or simply for a constant $c$
\begin{equation}
\rho_\alpha=c\times (\Phi_\alpha, \Phi_\alpha)_h\times \frac{vol_g}{vol_\omega}.
\end{equation}
This is precisely how the two metrics are related. Notice that the constant $c$ is irrelevant for computing the Ricci form.

Now we are in a position to derive a local formula for the Ricci form in terms of the biHermitian data. Let $\{z_i^\pm\}$ be local $J_\pm$-holomorphic coordinates in the same chart. Then locally
\[g=h^+_{i\bar{j}}dz^+_i\otimes d\bar{z}_j^+=h^-_{i\bar{j}}dz^-_i\otimes d\bar{z}_j^-.\]
Let
\[\gamma:=\frac{vol_g}{\sqrt{\det h^+\times \det h^-}\times vol_\omega}\]
and $\sigma_\pm$ be the connection forms of the $J_\pm$-holomorphic structures in the local frame of bi-spinors
\[s=(dz_1^+\wedge\cdots\wedge dz_n^+)^{1/2}\otimes (dz_1^-\wedge\cdots\wedge dz_n^-)^{1/2},\]
respectively, i.e. $\bar{\partial}_\pm s=\sigma_\pm s$. Note that $\gamma$ is $h_\omega(s,s)$ up to a constant factor.
\begin{proposition}In terms of the above local data, the Ricci form is
\[-\frac{\sqrt{-1}}{2}d\{(\partial_++\partial_-)\log \gamma+\sigma_++\sigma_--\bar{\sigma}_+-\bar{\sigma}_-+bg^{-1}[(\partial_+-\partial_-)\log \gamma+\sigma_+-\bar{\sigma}_+-\sigma_-+\bar{\sigma}_-]\}\]
where $\partial_\pm\log \gamma$ are the (1,0)-part of $d\log \gamma$ w.r.t. $J_\pm$ respectively.
\end{proposition}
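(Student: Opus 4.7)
\emph{Proof proposal.} The strategy is to transfer the computation from $U_{n,0}$ to $K_+^{1/2}\otimes K_-^{1/2}$ via the isomorphism $q^\flat$ of \S~\ref{spGK}, which intertwines both the $\mathbb{J}_1$-GH structure and the metric on the two line bundles induced from $g$. The symplectic metric $h_\omega$ differs from the Hodge metric $h$ only by a conformal factor: using $\|(dz_1^\pm\wedge\cdots\wedge dz_n^\pm)^{1/2}\|_h^2=1/\sqrt{\det h^\pm}$ together with $vol_g=\sqrt{\det h^+\det h^-}\,\gamma\,vol_\omega$, the relation $\rho_\alpha=c\,h(\Phi_\alpha,\Phi_\alpha)\,vol_g/vol_\omega$ established just before the proposition yields $h_\omega(s,s)=c\gamma$ for some positive constant $c$, which will be irrelevant under $d\log$.

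Next, for each of the two $J_\pm$-holomorphic structures on $K_+^{1/2}\otimes K_-^{1/2}$, the Chern connection of $h_\omega$ in the frame $s$ is pinned down by two constraints: its $(0,1)_\pm$-part must equal $\sigma_\pm$, and it must be compatible with $h_\omega$. A one-line Hermitian-line-bundle calculation then gives the connection $1$-form
\[A^\omega_\pm=\partial_\pm\log\gamma+\sigma_\pm-\bar\sigma_\pm.\]
By the explicit formula for the canonical generalized connection recalled at the end of \S~\ref{sec1}, the generalized Chern connection $D^\omega$ \emph{in the metric splitting} has ordinary part $\tfrac12(\nabla^\omega_++\nabla^\omega_-)$ and endomorphism-valued vector field $\chi^\omega=\tfrac12 g^{-1}(\nabla^\omega_+-\nabla^\omega_-)$; both are now known in closed form in the frame $s$.

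Finally I would pass to the symplectic splitting. Comparing $\Psi_{met}=e^{b-\sqrt{-1}\omega}$ with $\Psi_{sym}=e^{-\sqrt{-1}\omega}$ identifies the two splittings as being related by the B-transform $\mathfrak{s}_{sym}=\mathfrak{s}_{met}-b$. A direct computation using $D\psi=\chi\psi+\alpha\psi$ and the resulting change-of-splitting rule $(X,\xi)\mapsto(X,\xi+b(X))$ shows that the $\chi$-part is intrinsic (hence unchanged) while the ordinary connection $1$-form transforms as $A_{sym}=A_{met}+b(\chi)$. Applied to our situation,
\[A^\omega_{sym}=\tfrac12(A^\omega_++A^\omega_-)+\tfrac12 b g^{-1}(A^\omega_+-A^\omega_-),\]
and substituting the expressions for $A^\omega_\pm$ from the previous step and applying $-\sqrt{-1}d$ recovers the stated formula term by term.

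The main obstacle is purely one of bookkeeping: one must fix the B-transform sign convention carefully (both on the pure spinor side and on the identification of $E$ with $T\oplus T^*$) so as to produce the coefficient $+bg^{-1}$ rather than $-bg^{-1}$ in the final expression. All other ingredients---the $q^\flat$-intertwining from \S~\ref{spGK}, the splitting-independent decomposition of $D^\omega$ recalled in \S~\ref{sec1}, and the standard Chern-connection calculus for $\nabla^\omega_\pm$---have already been set up.
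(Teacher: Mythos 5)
Your proposal is correct and follows essentially the same route as the paper's proof: identify $h_\omega(s,s)$ with $\gamma$ up to an irrelevant constant, compute the Chern connection forms $\partial_\pm\log\gamma+\sigma_\pm-\bar\sigma_\pm$ of the $J_\pm$-holomorphic structures with respect to $h_\omega$ in the frame $s$, invoke the metric-splitting formula $D^\omega=\tfrac12(\nabla_++\nabla_-)+\tfrac12 g^{-1}(\nabla_+-\nabla_-)$, and then pass to the symplectic splitting via the $b$-field to obtain the ordinary part whose curvature, multiplied by $-\sqrt{-1}$, is the Ricci form. The only difference is that you spell out the $h_\omega(s,s)=c\gamma$ identification and the change-of-splitting rule $A_{sym}=A_{met}+b(\chi)$ (with its sign caveat) explicitly, which the paper leaves implicit.
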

\begin{proof}Note that \emph{in the metric splitting},
\[D^\omega=\frac{1}{2}(\nabla_++\nabla_-)+\frac{1}{2}g^{-1}(\nabla_+-\nabla_-)\]
where $\nabla_\pm$ are the Chern connections associated to the $J_\pm$-holomorphic structures respectively. Thus \emph{in the symplectic splitting}, the ordinary connection part of $D^\omega$ is
\[\nabla^\omega=\frac{1}{2}(\nabla_++\nabla_-)+\frac{1}{2}bg^{-1}(\nabla_+-\nabla_-).\]
Since in the local frame $s$ these Chern connections are classically determined by
\[\nabla_\pm s=(\partial_\pm \log \gamma+\sigma_\pm-\bar{\sigma}_\pm)s,\]
our conclusion then follows.
\end{proof}
\emph{Remark}. In some cases, the above seemingly messy formula can be simplified greatly: If we rescale $s$ with a smooth function $\varpi$ such that $\varpi s$ is biholomorphic (this is always possible at least around a regular point of $\mathbb{J}_1$) and consequently $\sigma_\pm\equiv 0$, then the Ricci form is
\[-\frac{\sqrt{-1}}{2}d[(\partial_++\partial_-)\log (|\varpi|^2\gamma)+bg^{-1}(\partial_+-\partial_-)\log (|\varpi|^2\gamma)].\]
This will be the form we shall apply to toric GK structures of symplectic type in the next section.

\section{scalar curvature for toric GK structures of symplectic type}\label{sec5}
\subsection{Boulanger's approach to scalar curvature}\label{bou}
Following the principle of scalar curvature as a moment map, L. Boulanger in \cite{Bou} defined the scalar curvature of a toric GK manifold of symplectic type formally as the moment map of a Hamiltonian action of an infinite-dimensional Lie group. In this subsection, for the reader's convenience, we sketch this formalism briefly. While Boulanger only gave an explicit expression of the scalar curvature in the anti-diagonal case in terms of the symplectic potential, we shall give such an expression for a \emph{generic} toric GK manifold of symplectic type.

Let $(M, \omega, \mathbb{T}, \mu)$ be a toric symplectic manifold and denote the space of all invariant \emph{almost} GK structures of symplectic type (with symplectic form $\omega$) by $\mathcal{M}$, which is formally an infinite-dimensional manifold. Elements in $\mathcal{M}$ are not necessarily integrable. However, such structures $(\mathbb{J}_1, \mathbb{J}_2)$ are in one-to-one correspondence with invariant \emph{almost} complex structures $J_+$ tamed with $\omega$. Define
\[A:=-2(J_++J_-)^{-1},\quad B:=-(J_+-J_-)(J_++J_-)^{-1},\quad K:=A+\sqrt{-1}B,\]
where $J_-$ is the symplectic adjoint of $J_+$, i.e. $J_-=-\omega^{-1}J_+^*\omega$.
Note that $K$ is a homomorphism of $T\otimes \mathbb{C}$ satisfying the following algebraic conditions:
\[K^2=-\textup{Id},\quad K^\omega=K.\]
Such $K$'s can be used to parameterize elements in $\mathcal{M}$. Consequently, at $K\in \mathcal{M}$, the tangent space is
\[T_K\mathcal{M}=\{\dot{K}\in \textup{Hom}(T\otimes \mathbb{C})|K\dot{K}+\dot{K}K=0, \dot{K}^\omega=\dot{K}\}.\]
Note that the condition of tameness has no constraint on a tangent vector in $T_K\mathcal{M}$ because this is an open condition.

$\mathcal{M}$ is formally an infinite-dimensional K$\ddot{a}$hler manifold. The complex structure $\mathbb{K}$ on $\mathcal{M}$ is defined by
\[\mathbb{K}\dot{K}=K\dot{K},\quad \forall \dot{K}\in T_K\mathcal{M},\]
and the symplectic form on $\mathcal{M}$ is defined by
\[\Omega_K(\dot{K}_1, \dot{K}_2)=\frac{1}{2}\int_M\textup{Tr}(K\dot{K}_1\dot{K}_2)vol_\omega,\quad \forall \dot{K}_1, \dot{K}_2\in T_K\mathcal{M}. \]

Denote $\textup{Ham}_c^\mathbb{T}(M, \omega)$ the group of Hamiltonian diffeomorphisms generated by invariant functions with zero mean supported in $\mathring{M}$. Elements in $\textup{Ham}_c^\mathbb{T}(M, \omega)$ act on $\mathcal{M}$ by conjugation: $\Upsilon \cdot K=\Upsilon_*K\Upsilon^{-1}_*$. The set $C_c^\mathbb{T}(M)$ of invariant smooth functions with zero mean supported in $\mathring{M}$ can be viewed as the Lie algebra of $\textup{Ham}_c^\mathbb{T}(M, \omega)$, with the Poisson bracket of functions as its Lie bracket. Then the fundamental vector filed $\mathbb{V}_f$ on $\mathcal{M}$ generated by $f\in C_c^\mathbb{T}(M)$ is
\[\mathbb{V}_f(K)=-L_{V_f}K,\quad K\in \mathcal{M},\]
where $L_{V_f}$ is the Lie derivative along the Hamiltonian vector field $V_f=-\omega^{-1}df$. More technical details concerning these formally defined structures can be found in Boulanger's Ph.D thesis \cite{Bou2}.

Let us choose admissible coordinates $\{\theta_i, \mu_i\}$. Using integration by part Boulanger proved the following
\begin{theorem} The action of $\textup{Ham}_c^\mathbb{T}(M, \omega)$ on $\mathcal{M}$ is Hamiltonian with the following moment map
\[\nu^f(K)=-\int_Mf\kappa vol_\omega, \]
where $$\kappa=\sum_{i,j}\frac{\partial^2Z_{ij}}{\partial\mu_i\partial \mu_j}$$
with $Z_{ij}=\omega(\partial_{\theta_i}, A\partial_{\theta_j})$.
\end{theorem}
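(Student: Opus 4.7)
The plan is to verify the defining identity
$$\langle d\nu^f|_K,\dot K\rangle=\Omega_K(\mathbb V_f(K),\dot K)\qquad\forall\,\dot K\in T_K\mathcal M,$$
for each invariant $f\in C_c^{\mathbb T}(M)$; $\mathbb T$-equivariance of $\nu$ is automatic because $\mathbb T$ is abelian. Throughout I would work in the admissible chart $\{\theta_i,\mu_i\}$ on $\mathring M$, in which $\omega=\sum_i d\mu_i\wedge d\theta_i$, $\textup{vol}_\omega=d\mu\,d\theta$ (up to sign and a combinatorial constant), and every invariant tensor -- in particular $K$, $A$, $B$ and $Z_{ij}$ -- is a function of $\mu$ only. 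Since $f$ is compactly supported in $\mathring M$, one never leaves this chart.

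First I would compute $\mathbb V_f(K)=-L_{V_f}K$. Because $f=f(\mu)$, the Hamiltonian vector field is $V_f=-\sum_i f_i\partial_{\theta_i}$ with $f_i:=\partial f/\partial\mu_i$; the only nontrivial commutators with the coordinate frame are $[V_f,\partial_{\mu_k}]=\sum_i f_{ik}\partial_{\theta_i}$, where $f_{ik}:=\partial^2 f/\partial\mu_i\partial\mu_k$. Since the matrix entries $K^\beta_\alpha$ in the frame $\{\partial_{\theta_i},\partial_{\mu_i}\}$ depend only on $\mu$, one has $V_f(K^\beta_\alpha)=0$. Expanding $L_{V_f}K(\partial_\alpha)=[V_f,K\partial_\alpha]-K[V_f,\partial_\alpha]$ therefore yields an explicit linear combination of the $f_{ik}$ with coefficients built from the matrix blocks of $K$; the first derivatives $f_i$ drop out, which is exactly why $\kappa$ will end up being second order in $\mu$.

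Next I would process the left-hand side. A tangent vector $\dot K\in T_K\mathcal M$ is automatically invariant, so $\dot A:=\textup{Re}\,\dot K$ depends only on $\mu$ and $\dot Z_{ij}=\omega(\partial_{\theta_i},\dot A\partial_{\theta_j})$ is a function of $\mu$ alone. Because $f$ is compactly supported in $\mathring M$ and $\textup{vol}_\omega$ factors in the admissible chart, two integrations by parts in the base directions $\mu_i$ and $\mu_j$ give, with no boundary terms,
$$\langle d\nu^f|_K,\dot K\rangle=-\int_M f\sum_{i,j}\frac{\partial^2\dot Z_{ij}}{\partial\mu_i\partial\mu_j}\textup{vol}_\omega=-\int_M\sum_{i,j}f_{ij}\,\omega(\partial_{\theta_i},\dot A\partial_{\theta_j})\,\textup{vol}_\omega.$$
Using $\omega(\partial_{\theta_i},\partial_{\mu_j})=-\delta_{ij}$ this is really $\int_M\sum_{i,j}f_{ij}\dot A^{\mu_i}_{\theta_j}\,\textup{vol}_\omega$, highlighting that only the $\theta\!\to\!\mu$ block of $\dot A$ contributes.

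Finally I would substitute the formula for $\mathbb V_f(K)$ from the second paragraph into
$$\Omega_K(\mathbb V_f(K),\dot K)=\tfrac12\int_M\textup{Tr}\bigl(K\,\mathbb V_f(K)\,\dot K\bigr)\textup{vol}_\omega$$
and collapse the trace using the algebraic identities $K^2=-\textup{Id}$, $K\dot K+\dot K K=0$, $\dot K^\omega=\dot K$ and $A^\omega=A$. I expect the main obstacle to lie precisely here: it is a combinatorial bookkeeping exercise, carried out block by block in the splitting $T\mathring M=\textup{span}\{\partial_\theta\}\oplus\textup{span}\{\partial_\mu\}$, to show that the trace collapses to $-2\sum_{i,j}f_{ij}\dot A^{\mu_i}_{\theta_j}$, the factor $2$ absorbing the $\tfrac12$ in $\Omega$. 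The constraints $K\dot K+\dot K K=0$ and $\dot K^\omega=\dot K$ are decisive here: they relate the four blocks of $\dot K$ so that only its $\theta\mu$-block is independent, matching the block of $\dot A$ that carries $\dot Z_{ij}$.
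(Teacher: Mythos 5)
You should first note that the paper does not prove this theorem at all: it is recalled verbatim from Boulanger \cite{Bou} with only the remark that it is proved ``using integration by parts'', so there is no in-paper argument to compare with; your outline (verify $\langle d\nu^f,\dot K\rangle=\Omega_K(\mathbb V_f(K),\dot K)$ in an admissible chart, integrate by parts twice in $\mu$, compute $\mathbb V_f(K)=-L_{V_f}K$ from commutators) is indeed the intended route. The first two steps of your plan are correct. The problem is that you stop exactly where the content of the theorem lies, and the collapse you ``expect'' does not follow from the ingredients you declare decisive. Concretely, writing $N$ for the endomorphism whose only nonzero block is the Hessian, $N=\left(\begin{smallmatrix}0&(f_{ij})\\ 0&0\end{smallmatrix}\right)$ in the frame $\{\partial_\theta,\partial_\mu\}$, your commutator computation gives $\mathbb V_f(K)=\pm[N,K]$, and then $K^2=-\textup{Id}$ together with $K\dot K+\dot KK=0$ and cyclicity of the trace yield $\tfrac12\textup{Tr}\bigl(K\,\mathbb V_f(K)\,\dot K\bigr)=\pm\textup{Tr}(N\dot K)=\pm\sum_{i,j}f_{ij}\,\dot K^{\mu_i}_{\theta_j}$. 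This involves the \emph{full complex} variation $\dot K=\dot A+\sqrt{-1}\dot B$, not just $\dot A$, whereas $d\nu^f(\dot K)=\pm\int_M\sum f_{ij}\,\dot A^{\mu_i}_{\theta_j}\,vol_\omega$ sees only $\dot A$. Moreover the constraint $\dot K^\omega=\dot K$ forces the $\mu\theta$-block of $\dot B$ to be \emph{symmetric}, so its contraction with the symmetric matrix $(f_{ij})$ has no algebraic reason to vanish: the two conditions $K\dot K+\dot KK=0$ and $\dot K^\omega=\dot K$ that you call decisive are not sufficient to reach ``$-2\sum f_{ij}\dot A^{\mu_i}_{\theta_j}$''. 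To finish one must invoke the real structure of $\mathcal M$: $\dot K$ arises from a single real variation $\dot J_+$ (with $\dot J_-=-\omega^{-1}\dot J_+^*\omega$), which links $\dot B$ to $\dot A$ and is what makes $\Omega$ real-valued on the genuine tangent space (equivalently, one should carry out the trace computation directly in terms of $\dot J_\pm$, or justify discarding the imaginary part). Until that is addressed, the central identity is asserted rather than proved, so the proposal has a genuine gap and not merely unfinished bookkeeping.

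Two smaller points. Equivariance is not ``automatic because $\mathbb T$ is abelian'': the acting group is $\textup{Ham}^{\mathbb T}_c(M,\omega)$, not $\mathbb T$; what rescues you is that invariant functions of $\mu$ Poisson-commute and that the cocycle vanishes, e.g. $\Omega_K(\mathbb V_f,\mathbb V_g)=\pm\textup{Tr}(N_f[N_g,K])$ integrates to zero because $N_fN_g=0$ --- this deserves a line. Finally, the signs (the convention for $V_f=-\omega^{-1}df$ versus $\omega=\sum_id\mu_i\wedge d\theta_i$, and $\omega(\partial_{\theta_i},\partial_{\mu_j})=-\delta_{ij}$) must be tracked consistently if you want to land on the stated sign of $\nu^f(K)=-\int_Mf\kappa\,vol_\omega$.
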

 Boulanger then called $\kappa$ the scalar curvature of the almost GK structure parameterized by $K$. In particular, if $\{\theta_i, \mu_i\}$ are the admissible coordinates underlying the GK structure of symplectic type described in \S~\ref{sym}, we have
 \begin{corollary}The scalar curvature of a toric GK structure of symplectic type described in \S~\ref{sym} is
 \begin{equation}\kappa=-\sum_{ij}\frac{\partial^2(\Xi^{-1})_{ij}}{\partial\mu_i\partial \mu_j},\label{sca1}\end{equation}
 where $\Xi=\phi_s+\frac{1}{4}F(\phi_s)^{-1}F$ and $\phi_s$ is the Hessian of the symplectic potential $\tau$.
  \end{corollary}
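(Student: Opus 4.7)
The plan is to derive the corollary directly from Boulanger's theorem by inspecting the explicit matrix form of $(\frac{J_++J_-}{2})^{-1}$ recorded at the end of \S~\ref{sym}. The key observation is that when $\omega$ pairs $\partial_{\theta_i}$ against $A\partial_{\theta_j}$, only the $\partial_\mu$-component of $A\partial_{\theta_j}$ survives, and reading the displayed matrix shows that the corresponding block is exactly $\Xi^{-1}$; applying $\sum_{ij}\partial^2/\partial\mu_i\partial\mu_j$ then produces the formula immediately.

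The first step is to unpack $A$. Since $A=-2(J_++J_-)^{-1}=-\bigl(\frac{J_++J_-}{2}\bigr)^{-1}$, its block matrix in the frame $\{\partial_{\theta_i},\partial_{\mu_i}\}$ is the negative of the one displayed in \S~\ref{sym}. In admissible coordinates one has $\omega=d\mu_k\wedge d\theta_k$, hence $\omega(\partial_{\theta_i},\partial_{\theta_k})=0$ and $\omega(\partial_{\theta_i},\partial_{\mu_k})=-\delta_{ik}$, so $Z_{ij}=\omega(\partial_{\theta_i},A\partial_{\theta_j})$ picks out only the $\partial_\mu$-component of $A\partial_{\theta_j}$. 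The second step is to read that block off the displayed matrix: its upper-right block is exactly $\Xi^{-1}$, with $\Xi=\phi_s+\frac{1}{4}F\phi_s^{-1}F$. Since $\phi_s^T=\phi_s$ and $F^T=-F$, one immediately has $\Xi^T=\Xi$, so $\Xi^{-1}$ is symmetric and the order of indices in $(\Xi^{-1})_{ij}$ is irrelevant. Combining with the sign conventions of the first step yields $Z_{ij}=-(\Xi^{-1})_{ij}$.

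The final step is to substitute this identification into $\kappa=\sum_{ij}\partial^2 Z_{ij}/\partial\mu_i\partial\mu_j$ from the theorem to obtain formula~(\ref{sca1}). The entire argument is a matrix-bookkeeping exercise; no analytic content is hidden, and the remarkable fact that drives it is that the $(\partial_\theta,\partial_\mu)$-block of $(J_++J_-)^{-1}$ depends only on the combination $\Xi$, so that the $\mu$-Hessian of $Z$ collapses to that of $-\Xi^{-1}$. The main point requiring care is a careful tracking of signs through three conventions---the matrix convention used in \S~\ref{sym} to write endomorphisms in the frame $\{\partial_{\theta_i},\partial_{\mu_i}\}$ (which, as one checks against the formula for $J_+^*$, represents a map $L$ by the matrix of $L^*$ in the dual basis); the sign of $\omega=d\mu_k\wedge d\theta_k$; and the normalization $\nu^f=-\int_M f\kappa\,vol_\omega$ of the moment map---which must all be aligned to produce the overall minus sign appearing in~(\ref{sca1}).
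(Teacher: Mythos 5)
Your route is the same as the paper's: the proof there consists precisely of reading off, from the displayed matrix of $(\frac{J_++J_-}{2})^{-1}$ in the frame $\{\partial_{\theta_i},\partial_{\mu_i}\}$, that $A\partial_\theta=-\frac{1}{2}\Xi^{-1}F\phi_0\,\partial_\theta-\Xi^{-1}\partial_\mu$, and substituting into $Z_{ij}=\omega(\partial_{\theta_i},A\partial_{\theta_j})$, where only the $\partial_\mu$-component survives. Your handling of the transposed matrix convention (the displayed matrices act on the column of frame fields, i.e.\ they are the matrices of the dual maps) agrees with the paper's own formula for $A\partial_\theta$, and your observation that $\Xi^{-1}$ is symmetric is correct. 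So in substance this is the paper's argument, spelled out.

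The one point that does not close is the sign. From the ingredients you state --- $\omega=d\mu_k\wedge d\theta_k$, hence $\omega(\partial_{\theta_i},\partial_{\mu_k})=-\delta_{ik}$, and $A=-\bigl(\frac{J_++J_-}{2}\bigr)^{-1}$, so that the $\partial_\mu$-component of $A\partial_{\theta_j}$ is $-(\Xi^{-1})_{jk}\partial_{\mu_k}$ --- one gets $Z_{ij}=\bigl(-(\Xi^{-1})_{jk}\bigr)\bigl(-\delta_{ik}\bigr)=+(\Xi^{-1})_{ij}$, not $-(\Xi^{-1})_{ij}$ as you assert: the two minus signs cancel. Saying that the three conventions ``must all be aligned to produce the overall minus sign'' is reverse-engineering rather than a derivation; with the conventions as you (and the paper) literally state them, the combination produces the opposite sign, so the minus in (\ref{sca1}) has to be traced to the sign conventions implicit in the statement of Boulanger's theorem (the convention for $\omega$ or for $Z_{ij}$ in \cite{Bou}), which the paper leaves tacit. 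To actually pin it down you should fix those conventions explicitly and check them against the K\"ahler case $F=0$, $C=0$: there $A=J_+=J_-=I$, $\Xi=\phi_s$, and Abreu's formula forces $\kappa=-(\phi_s^{-1})_{ij,ij}$, confirming that the minus sign in (\ref{sca1}) is the correct final answer and locating the discrepancy upstream in the transcription of the theorem rather than in the block identification, which is the genuinely substantive part and which you carried out correctly.
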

  \begin{proof}
  We only need to note that in the admissible coordinates $\{\theta_i, \mu_i\}$,
  \[A\partial_\theta=-\frac{1}{2}\Xi^{-1}F\phi_0\partial_\theta-\Xi^{-1}\partial_\mu\]
  where $\phi_0=(\phi_s)^{-1}\phi_a$.
  \end{proof}
  \emph{Remark}. If $F=0$, in other words in the anti-diagonal case, then $\Xi=\phi_s$ and we recover Boulanger's expression of scalar curvature for this case. This is actually the scalar curvature of the underlying toric K$\ddot{a}$hler structure. It is remarkable that between the two constant matrices $C$ and $F$ only the latter has contribution to the scalar curvature.

\subsection{Scalar curvature computed via Goto's formalism}\label{goto2}
Now due to the analysis in \S~\ref{goto}, it is conceptually quite clear how to compute the scalar curvature for a toric GK manifold $M$ of symplectic type. However, we will use $K_+^{-1}\otimes K_-^{-1}$ rather than $K_+^{1/2}\otimes K_-^{1/2}$ as our basic object to be analyzed. $K_+^{-1}\otimes K_-^{-1}$ is the square of the  dual of the canonical line bundle $K_+^{1/2}\otimes K_-^{1/2}$. The GH structure and symplectic metric on the latter can be naturally inherited by the former. We can compute the canonical generalized connection on $K_+^{-1}\otimes K_-^{-1}$ to get the Ricci form. This is much more in the original spirit of K$\ddot{a}$hler geometry---after all in K$\ddot{a}$hler geometry Ricci form is the first Chern form of the anti-canonical line bundle. We will work on the open and dense set $\mathring{M}$ and try to find a GH section of $K_+^{-1}\otimes K_-^{-1}$, i.e. a biholomorphic section. Notation in \S~\ref{sym} will continue to be used throughout this subsection.

Let us introduce two else coordinate systems first. Since $\zeta^\pm_i$ are flat connections, locally $\zeta^\pm_i=d\theta_i^\pm$ for some functions $\theta_i^\pm$. Then $\{\theta_i^\pm, \mu_i\}$ can be used as coordinates on $\mathring{M}$. In particular, $\zeta_i^\pm-\sqrt{-1}J_\pm^*\zeta_i^\pm$ are of the form $dz_i^\pm$ where $\{z_i^\pm\}$ can be viewed as $J_\pm$-holomorphic coordinates on $\mathring{M}$ respectively.

In terms of the coordinates $\{\theta_i^+, \mu_i\}$, we have\footnote{Note that in the coordinate systems $\{\theta_i^\pm, \mu_i\}$, $\partial_{\mu_i}$ actually represents different vector fields. Thus we use $\partial_{\mu_i^\pm}$ to distinguish them. However, we have $\partial_{\theta_i^\pm}=\partial_{\theta_i}=X_i$.}
\[\partial_{z_i^+}=\frac{1}{2}(\partial_{\theta_i^+}+\sqrt{-1}(\phi^{-1})_{ji}\partial_{\mu_j^+}),\]
and similarly, in terms of $\{\theta_i^-, \mu_i\}$,
\[\partial_{z_i^-}=\frac{1}{2}(\partial_{\theta_i^-}+\sqrt{-1}(\phi^{-1})_{ij}\partial_{\mu_j^-}).\]
With $\{\partial_{\theta_i^+}, \partial_{\mu_i^+}\}$ the Riemannian metric has the following matrix form \cite{Wang2}
\[g\sim \left(
           \begin{array}{cc}
             (\phi^{-1})_s & \phi^{-1}F/2 \\
             -F(\phi^{T})^{-1}/2 & \phi_s\\
           \end{array}
         \right).\]
Then it can be computed directly that\footnote{We have viewed $g$ as a linear map and the notation here is thus slightly different from the common convention.}
\[\det g=\det(\phi^{-1})_s\times \det \Xi,\]
\[g_{\bar{j}i}=\frac{1}{2}[\phi^{-1}(\phi_s-\frac{\sqrt{-1}}{2}F)(\phi^T)^{-1}]_{ji},\]
\[\det(g_{\bar{j}i})=(1/2)^n\det(\phi_s-\frac{\sqrt{-1}}{2}F)\times[\det(\phi^{-1})]^2.\]
\begin{lemma}\label{det}\[\det(\phi_s-\frac{\sqrt{-1}}{2}F)=\det(\phi_s+\frac{\sqrt{-1}}{2}F)=(\det \phi_s \times \det \Xi)^{1/2}.\]
\end{lemma}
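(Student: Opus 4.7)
The proof has two independent pieces: the first equality is a symmetry fact, and the second a multiplicative identity. I would handle them in that order.

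For the first equality $\det(\phi_s-\tfrac{\sqrt{-1}}{2}F)=\det(\phi_s+\tfrac{\sqrt{-1}}{2}F)$, I would simply transpose: since $\phi_s^T=\phi_s$ and $F^T=-F$, we have
\[(\phi_s+\tfrac{\sqrt{-1}}{2}F)^T=\phi_s-\tfrac{\sqrt{-1}}{2}F,\]
and determinants are invariant under transpose. (Equivalently, $\phi_s+\tfrac{\sqrt{-1}}{2}F$ is Hermitian, so its determinant is real and equals the determinant of its complex conjugate.)

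For the second equality, the clean approach is to symmetrize out $\phi_s$. Since $\phi_s$ is positive-definite, set $\tilde F:=\phi_s^{-1/2}F\phi_s^{-1/2}$, which is again real antisymmetric. I would then compute the product of the two complex-conjugate determinants in closed form:
\[\det(\phi_s+\tfrac{\sqrt{-1}}{2}F)\det(\phi_s-\tfrac{\sqrt{-1}}{2}F)=(\det\phi_s)^2\det\bigl(I+\tfrac{\sqrt{-1}}{2}\tilde F\bigr)\det\bigl(I-\tfrac{\sqrt{-1}}{2}\tilde F\bigr).\]
Because $I\pm\tfrac{\sqrt{-1}}{2}\tilde F$ are polynomials in the single matrix $\tilde F$, they commute, and the cross terms cancel, giving
\[\bigl(I+\tfrac{\sqrt{-1}}{2}\tilde F\bigr)\bigl(I-\tfrac{\sqrt{-1}}{2}\tilde F\bigr)=I+\tfrac{1}{4}\tilde F^{\,2}.\]
So the left side equals $(\det\phi_s)^2\det(I+\tfrac14\tilde F^{\,2})$.

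For the right side, I would observe that $\phi_s^{-1/2}F\phi_s^{-1}F\phi_s^{-1/2}=\tilde F^{\,2}$, so
\[\det\Xi=\det\bigl(\phi_s+\tfrac14 F\phi_s^{-1}F\bigr)=\det\phi_s\cdot\det\bigl(I+\tfrac14\tilde F^{\,2}\bigr),\]
whence $\det\phi_s\cdot\det\Xi=(\det\phi_s)^2\det(I+\tfrac14\tilde F^{\,2})$. Matching the two expressions and taking square roots (both sides are positive, because $\Xi$ was assumed positive-definite and $\phi_s+\tfrac{\sqrt{-1}}{2}F$ is Hermitian with real positive determinant equal to $\det\phi_s\cdot\det(I+\tfrac14\tilde F^{\,2})\ge 0$) yields the claimed identity. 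I don't see any serious obstacle here; the only point requiring a moment of care is the sign/positivity when extracting the square root, which is handled by the positive-definiteness assumption on $I+\tfrac14[\phi_s^{-1/2}F\phi_s^{-1/2}]^2$ built into the very definition of the toric GK structure in \S\ref{sym}.
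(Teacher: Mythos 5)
Your argument is correct, and it reaches the identity by a slightly different (though closely related) route than the paper. The paper realifies: it forms the $2n\times 2n$ real matrix $M_1=\left(\begin{smallmatrix}\phi_s & F/2\\ -F/2 & \phi_s\end{smallmatrix}\right)$, observes that it commutes with the standard $J$, identifies $\phi_s+\tfrac{\sqrt{-1}}{2}F$ with the restriction of $M_1$ to the $\sqrt{-1}$-eigenspace of $J$, and then gets $\det M_1=\bigl(\det(\phi_s+\tfrac{\sqrt{-1}}{2}F)\bigr)^2$ together with the block-determinant evaluation $\det M_1=\det\phi_s\det\Xi$. You instead stay with $n\times n$ complex matrices: transpose invariance gives the first equality outright, and the $\phi_s^{1/2}$-congruence $\phi_s\pm\tfrac{\sqrt{-1}}{2}F=\phi_s^{1/2}(I\pm\tfrac{\sqrt{-1}}{2}\tilde F)\phi_s^{1/2}$ reduces the conjugate product to $\det(I+\tfrac14\tilde F^2)$, which you match against $\det\Xi=\det\phi_s\det(I+\tfrac14\tilde F^2)$. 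In substance both proofs compute the same product $\det(\phi_s+\tfrac{\sqrt{-1}}{2}F)\det(\phi_s-\tfrac{\sqrt{-1}}{2}F)=\det\phi_s\det\Xi$; your symmetrization makes the role of the positivity hypothesis $I+\tfrac14[\phi_s^{-1/2}F\phi_s^{-1/2}]^2>0$ more visible, while the paper's realification is the version that it reuses later (the same $M_1$, inverted, yields Eq.~(\ref{FI})). One small slip in your positivity discussion: $\det(\phi_s+\tfrac{\sqrt{-1}}{2}F)$ is \emph{not} equal to $\det\phi_s\cdot\det(I+\tfrac14\tilde F^2)$ --- it is the \emph{square} of the determinant that equals $(\det\phi_s)^2\det(I+\tfrac14\tilde F^2)$, so as written the justification for taking the positive square root is circular. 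The fix is immediate with your own setup: the Hermitian matrix $\tfrac{\sqrt{-1}}{2}\tilde F$ has eigenvalues in $(-1,1)$ precisely because $I+\tfrac14\tilde F^2>0$, so $I+\tfrac{\sqrt{-1}}{2}\tilde F$ is positive definite, hence so is its $\phi_s^{1/2}$-congruence $\phi_s+\tfrac{\sqrt{-1}}{2}F$, and its determinant is genuinely positive; with that repaired, extracting the square root is legitimate and the lemma follows. (The paper glosses this sign issue as well.)
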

\begin{proof}Consider the real matrix
\[M_1=\left(
    \begin{array}{cc}
      \phi_s & \frac{F}{2} \\
      -\frac{F}{2} & \phi_s \\
    \end{array}
  \right),
\]
which commutes with the matrix
\[J=\left(
    \begin{array}{cc}
      0 & \textup{I} \\
      -\textup{I} & 0 \\
    \end{array}
  \right).
\]
Then $M_2:=\phi_s+\frac{\sqrt{-1}}{2}F$ is nothing else but the restriction of $M_1$ on the $\sqrt{-1}$-eigenspace $W\cong \mathbb{C}^n\subset \mathbb{R}^{2n}\otimes \mathbb{C}$ of $J$ in terms of the standard basis of $\mathbb{C}^n$. It is elementary to find $\det M_1=(\det(\phi_s+\frac{\sqrt{-1}}{2}F))^2$ and thus our formula can be derived.
\end{proof}

We are especially interested in how $\nabla^+$ acts on $\partial_{z_j^+}$.
\begin{lemma}\label{Bismut2}Let $\{\theta_i, \mu_i\}$ be the admissible coordinates associated to the underlying toric K$\ddot{a}$hler structure. Then \[\nabla^+_{\partial_{\theta_i}}\partial_{z_j^+}=\frac{\sqrt{-1}}{4}(\phi^{-1})_{ij,k}(\phi^{-1})_{kl}g^{-1}d\bar{z}_l^+\]
and
\begin{eqnarray*}\nabla^+_{\partial_{\mu_i}}\partial_{z_j^+}&=&\frac{1}{4}(\phi^{-1})_{kj,i}[\delta_{kl}+\frac{\sqrt{-1}}{2}(\phi^{-1}F)_{kl}]g^{-1}d\bar{z}_l^+\\
&-&\frac{\sqrt{-1}}{8}[(\phi^{-1}F)_{kj,i}-(\phi^{-1}F)_{ij,k}](\phi^{-1})_{kl}g^{-1}d\bar{z}_l^+,\end{eqnarray*}
where $\delta_{ij}$ is the Kronecker delta.
\end{lemma}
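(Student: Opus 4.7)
The plan is to verify both identities by a direct computation in the invariant coordinate frame $\{\partial_{\theta_i},\partial_{\mu_i}\}$ coming from the underlying toric K\"ahler structure. Since $\nabla^+$ preserves $J_+$, both sides of each identity are $J_+$-type $(1,0)$ vectors, so the claims reduce, after pairing against $\partial_{\bar z_m^+}$ via $g$, to two scalar identities in the entries of the matrices $\phi,F$ and their $\mu$-derivatives.

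First I would expand $\partial_{z_j^+}$ in the basis $\{\partial_{\theta_i},\partial_{\mu_i}\}$. Starting from the formula $\partial_{z_j^+}=\frac12(\partial_{\theta_j^+}+\sqrt{-1}(\phi^{-1})_{kj}\partial_{\mu_k^+})$ and the coordinate change $\theta^+=\theta-\frac12 F\mu$, $\mu^+=\mu$, one reads off $\partial_{\theta_j^+}=\partial_{\theta_j}$ and $\partial_{\mu_k^+}=\frac12 F_{lk}\partial_{\theta_l}+\partial_{\mu_k}$, giving
\[\partial_{z_j^+}=a_l(\mu)\partial_{\theta_l}+b_l(\mu)\partial_{\mu_l},\]
with $a_l=\frac12\delta_{lj}+\frac{\sqrt{-1}}{4}(\phi^{-1})_{kj}F_{lk}$ and $b_l=\frac{\sqrt{-1}}{2}(\phi^{-1})_{lj}$, both functions of $\mu$ only.

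Next I would compute the ``bare'' Bismut derivatives $\nabla^+_{\partial_{\theta_i}}\partial_{\theta_l}$, $\nabla^+_{\partial_{\theta_i}}\partial_{\mu_l}$, $\nabla^+_{\partial_{\mu_i}}\partial_{\theta_l}$, $\nabla^+_{\partial_{\mu_i}}\partial_{\mu_l}$ from the Koszul-type identity
\[2g(\nabla^+_X Y,W)=Xg(Y,W)+Yg(X,W)-Wg(X,Y)+H(X,Y,W),\]
(all Lie brackets vanish in the coordinate frame), where the entries of $g$ are read off the matrix form given in \S~\ref{sym} and $H=db$ is obtained by exterior differentiating the matrix form of $b$ in the same section. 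Leibniz then yields
\[\nabla^+_{\partial_{\theta_i}}\partial_{z_j^+}=a_l\,\nabla^+_{\partial_{\theta_i}}\partial_{\theta_l}+b_l\,\nabla^+_{\partial_{\theta_i}}\partial_{\mu_l},\]
by $\partial_{\theta_i}a_l=\partial_{\theta_i}b_l=0$ (invariance), while for the $\partial_{\mu_i}$-direction two further contributions $(\partial_{\mu_i}a_l)\partial_{\theta_l}+(\partial_{\mu_i}b_l)\partial_{\mu_l}$ appear, handled via $(\phi^{-1})_{pq,k}=-(\phi^{-1})_{pa}\phi_{ab,k}(\phi^{-1})_{bq}$ and the total symmetry of $\phi_{s,ab,k}=\tau_{,abk}$.

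The final step is pairing with $\partial_{\bar z_m^+}$. Since $g(g^{-1}d\bar z_l^+,\partial_{\bar z_m^+})=\delta_{lm}$, the stated RHS of the first identity collapses to the scalar $\frac{\sqrt{-1}}{4}(\phi^{-1})_{ij,k}(\phi^{-1})_{km}$, and analogously for the second identity. The comparison uses throughout the explicit expression $g^+_{j\bar m}=\frac12[\phi^{-1}(\phi_s-\frac{\sqrt{-1}}{2}F)(\phi^T)^{-1}]_{jm}$ recorded in the paragraph preceding Lemma~\ref{det}. The main obstacle is clerical: $F$-dependent contributions enter from three different sources --- the $F$-shift in the coordinate change $\partial_{\mu_k^+}=\frac12 F_{lk}\partial_{\theta_l}+\partial_{\mu_k}$, the off-diagonal $F$-entries of the matrices of $g$ and $b$, and the $H=db$ correction --- and these must be tracked in parallel so as to collapse cleanly into the compact RHS. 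The identity in the $\partial_{\mu_i}$-direction is appreciably heavier than the one in the $\partial_{\theta_i}$-direction because the derivatives $\partial_{\mu_i}a_l,\partial_{\mu_i}b_l$ now contribute an additional $F$-entangled layer, and this is precisely what produces the second term (antisymmetrised in $i,k$) on the RHS of the $\partial_{\mu_i}$-formula.
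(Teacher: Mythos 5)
Your outline is sound and would succeed: the Koszul-plus-torsion formula you quote is the correct characterization of $\nabla^+$ in a coordinate frame, the expansion $\partial_{z_j^+}=a_l\partial_{\theta_l}+b_l\partial_{\mu_l}$ with $a_l=\frac12\delta_{lj}+\frac{\sqrt{-1}}{4}(\phi^{-1})_{kj}F_{lk}$, $b_l=\frac{\sqrt{-1}}{2}(\phi^{-1})_{lj}$ is consistent with $\zeta^+=\zeta-\frac12 Fd\mu$, and pairing against $\partial_{\bar z_m^+}$ does pin down both sides since they are $(1,0)$ and $g$ pairs $T^{1,0}_+$ with $T^{0,1}_+$ nondegenerately. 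The difference from the paper is in how the complex frame is handled, and it costs you real labor: the paper never expands $\partial_{z_j^+}$ in the real frame. Instead it computes only $\nabla^+_{\partial_{\theta_i}}\partial_{\theta_j}$ and $\nabla^+_{\partial_{\mu_i}}\partial_{\theta_j}$ (Levi-Civita Christoffels plus the components $H(\partial_\theta,\partial_\theta)$, $H(\partial_\mu,\partial_\theta)$ of $H=db$), and then uses $\nabla^+J_+=0$ as a projector, $\nabla^+_X\partial_{z_j^+}=\frac12(\mathrm{Id}-\sqrt{-1}J_+)\nabla^+_X\partial_{\theta_j}$, converting to the stated $g^{-1}d\bar z^+_l$ form via $(\mathrm{Id}+\sqrt{-1}J_+^*)d\mu=-\sqrt{-1}(\phi^{-1})^Td\bar z^+$ and $(\mathrm{Id}+\sqrt{-1}J_+^*)d\theta=(\mathrm{I}-\frac{\sqrt{-1}}{2}F(\phi^{-1})^T)d\bar z^+$. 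That route derives the formula directly and needs neither $\nabla^+_X\partial_{\mu_l}$, nor $H(\partial_\mu,\partial_\mu)$, nor the $\mu\mu$-block derivatives of $g$, nor the $\partial_{\mu_i}a_l$, $\partial_{\mu_i}b_l$ terms that your version must track; your route is a heavier component-by-component verification whose only advantage is that it uses no structural input beyond $\nabla^+ g=0$ plus the type decomposition (you invoke $\nabla^+J_+=0$ only to justify the pairing reduction). If you carry your plan out, the bookkeeping of the three $F$-sources you list is exactly where errors are likely, so it would be worth organizing the computation so that the paper's two intermediate formulas for $\nabla^+\partial_{\theta_j}$ appear as checkpoints.
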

\begin{proof}Note that in the admissible coordinates, the matrix form of $g$ and $b$ take the following forms
\[g\sim\left(
         \begin{array}{cc}
           (\phi^{-1})_s & (\phi^{-1})_aF/2 \\
            F(\phi^{-1})_a/2& \phi_s+\frac{1}{4}F(\phi^{-1})_sF \\
         \end{array}
       \right),\quad b\sim \left(
                             \begin{array}{cc}
                               (\phi^{-1})_a & (\phi^{-1})_sF/2 \\
                               F(\phi^{-1})_s/2 & \phi_a+\frac{1}{4}F(\phi^{-1})_aF \\
                             \end{array}
                           \right).
\]
Explicitly, we have
\[b=\frac{1}{2}(\phi^{-1})_{aij}d\theta_jd\theta_i+\frac{1}{2}[(\phi^{-1})_sF]_{ij}d\theta_jd\mu_i+\frac{1}{2}[\phi_a+\frac{1}{4}F(\phi^{-1})_aF]_{ij}d\mu_jd\mu_i.\]

Now the curvature 3-form in the metric splitting is $H=db$. We want to find explicit expressions for $H(\partial_{\theta_i}, \partial_{\theta_j})$ and $H(\partial_{\mu_i}, \partial_{\theta_j})$, and it is clear that only the first two terms of the above expression of $b$ contribute:
\[H(\partial_{\theta_i}, \partial_{\theta_j})=-(\phi^{-1})_{aij,k}d\mu_k,\]
and
\[H(\partial_{\mu_i}, \partial_{\theta_j})=-(\phi^{-1})_{ajk,i}d\theta_k+\frac{1}{2}\{[(\phi^{-1})_sF]_{kj,i}-[(\phi^{-1})_sF]_{ij,k}\}d\mu_k.\]
It can be computed easily, using the general expression of Christoffel coefficients
\[\Gamma_{ij}^k=\frac{1}{2}G^{kl}(G_{il,j}+G_{jl,i}-G_{ij,l}),\]
that
\[\nabla^{LC}_{\partial_{\theta_i}}\partial_{\theta_j}=-\frac{1}{2}(\phi^{-1})_{sij,k}g^{-1}d\mu_k,\]
and that
\[\nabla^{LC}_{\partial_{\mu_i}}\partial_{\theta_j}=\frac{1}{2}(\phi^{-1})_{sjk,i}g^{-1}d\theta_k+\frac{1}{4}\{[(\phi^{-1})_aF]_{kj,i}-[(\phi^{-1})_aF]_{ij,k}\}g^{-1}d\mu_k.\]
Combining the above together, we obtain
\[\nabla^+_{\partial_{\theta_i}}\partial_{\theta_j}=-\frac{1}{2}(\phi^{-1})_{ij,k}g^{-1}d\mu_k\]
and
\[\nabla^+_{\partial_{\mu_i}}\partial_{\theta_j}=\frac{1}{2}(\phi^{-1})_{kj,i}g^{-1}d\theta_k+\frac{1}{4}[(\phi^{-1}F)_{kj,i}-(\phi^{-1}F)_{ij,k}]g^{-1}d\mu_k.\]

Now let us turn to the computation of $\nabla^+\partial_{z_j^+}$. Note that
\[(\textup{Id}+\sqrt{-1}J_+^*)d\mu=-\sqrt{-1}(\phi^{-1})^Td\bar{z}^+\]
and
\begin{eqnarray*}(\textup{Id}+\sqrt{-1}J_+^*)d\theta&=&(\textup{Id}+\sqrt{-1}J_+^*)(d\theta^++\frac{1}{2}Fd\mu)\\
&=&(\textup{I}-\frac{\sqrt{-1}}{2}F(\phi^{-1})^T)d\bar{z}^+.\end{eqnarray*}
Since \[\partial_{z_i^+}=\frac{1}{2}(\textup{Id}-\sqrt{-1}J_+)\partial_{\theta_i}\] and that $J_+$ is $\nabla^+$-flat and $g$-compatible, we have
\[\nabla^+_{\partial_{\theta_i}}\partial_{z_j^+}=\frac{\sqrt{-1}}{4}(\phi^{-1})_{ij,k}(\phi^{-1})_{kl}g^{-1}d\bar{z}_l^+\]
and
\begin{eqnarray*}\nabla^+_{\partial_{\mu_i}}\partial_{z_j^+}&=&\frac{1}{4}(\phi^{-1})_{kj,i}[\delta_{kl}+\frac{\sqrt{-1}}{2}(\phi^{-1}F)_{kl}]g^{-1}d\bar{z}_l^+\\
&-&\frac{\sqrt{-1}}{8}[(\phi^{-1}F)_{kj,i}-(\phi^{-1}F)_{ij,k}](\phi^{-1})_{kl}g^{-1}d\bar{z}_l^+.\end{eqnarray*}
\end{proof}
\begin{lemma}\label{holos}Let $\epsilon=[\frac{\det(\phi_s\Xi)}{(\det \phi)^2}]^{-1/2}$ and $s_0^+=\partial_{z_1^+}\wedge\cdots \wedge\partial_{z_n^+}$. Then $\epsilon s_0^+$ is a $J_-$-holomorphic section of $K_+^{-1}$.
\end{lemma}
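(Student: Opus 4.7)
By the earlier Proposition, $\nabla^+$ makes $T^{1,0}_+$ into a $J_-$-holomorphic bundle and is itself the Chern connection for this holomorphic structure together with the Hermitian metric $h_+$. Passing to determinants, $\nabla^+$ on $K_+^{-1}=\det T^{1,0}_+$ is the Chern connection for the induced $J_-$-holomorphic structure and the induced metric $\det h_+$; in particular, its $(0,1)_-$-part coincides with $\bar\partial_-$. Thus, writing $\nabla^+s_0^+=\alpha\otimes s_0^+$, the conclusion $\bar\partial_-(\epsilon s_0^+)=0$ is equivalent to the identity of $1$-forms
\[
\alpha^{(0,1)_-}+\bar\partial_-\log\epsilon=0.
\]

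The first task is to compute $\alpha=\mathrm{tr}(\nabla^+\vert_{T^{1,0}_+})$ in the frame $\{\partial_{z_j^+}\}$. Lemma \ref{Bismut2} gives $\nabla^+\partial_{z_j^+}$ as a $T^{1,0}_+$-valued $1$-form expressed through $g^{-1}d\bar z^+_l$; rewriting $g^{-1}d\bar z^+_l=(H^{-1})_{lk}\partial_{z_k^+}$ via the inverse of the Hermitian matrix $H=(h^+_{i\bar j})$, whose explicit form is recorded just before Lemma \ref{det}, and tracing in $j$ produces $\alpha$ as a $1$-form in the admissible basis $\{d\theta_i,d\mu_i\}$. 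Next, extract the $(0,1)_-$-component using $d\theta_i=d\theta^-_i-\tfrac12 F_{ij}d\mu_j$ together with
\[
d\theta^-_j=\tfrac12(dz^-_j+d\bar z^-_j),\qquad d\mu_l=\tfrac{\sqrt{-1}}{2}(\phi^{-1})_{lj}(dz^-_j-d\bar z^-_j),
\]
and read off $\alpha^{(0,1)_-}$ as the coefficient of $d\bar z^-_j$.

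The second, independent task is to compute $\bar\partial_-\log\epsilon$ directly. Because $\phi$, $\phi_s$ and $\Xi$ are functions of $\mu$ alone, so is $\log\epsilon$, and the change-of-basis formula above yields at once
\[
\bar\partial_-\log\epsilon=-\tfrac{\sqrt{-1}}{2}(\phi^{-1})_{lj}(\partial_{\mu_l}\log\epsilon)\,d\bar z^-_j.
\]
Writing $\log\epsilon=\log\det\phi-\tfrac12\log\det(\phi_s\Xi)$ and applying Lemma \ref{det} to identify $(\det\phi_s\det\Xi)^{1/2}$ with $\det(\phi_s-\tfrac{\sqrt{-1}}{2}F)$ expresses this side entirely through $\mu$-derivatives of $\log\det\phi$ and $\log\det(\phi_s-\tfrac{\sqrt{-1}}{2}F)$. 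On the $\alpha$-side, the formula $\det H=(1/2)^n\det(\phi_s-\tfrac{\sqrt{-1}}{2}F)/(\det\phi)^2$ recorded just before Lemma \ref{det} shows that $\alpha$ is ultimately governed by the same two determinants, and the required cancellation then reduces to a direct application of Jacobi's formula $\partial\log\det A=\mathrm{tr}(A^{-1}\partial A)$.

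The main obstacle is the index-heavy bookkeeping of the first task. Lemma \ref{Bismut2} is stated in the admissible frame $\{\partial_\theta,\partial_\mu\}$, but the $(0,1)_-$-decomposition mixes this with the $F$-twist coming from $\zeta^-=\zeta+\tfrac12Fd\mu$ and with the $\phi$-twist relating $d\mu$ to $dz^-,d\bar z^-$; on top of this, the passage through $H^{-1}$ introduces further $\phi$- and $F$-dependent terms. Substantial cancellations must occur after the trace in $j$, and it is precisely Lemma \ref{det} that consolidates the remaining terms into derivatives of a single complex determinant, which explains the particular combination of $\det\phi$ and $\det(\phi_s\Xi)$ entering the definition of $\epsilon$.
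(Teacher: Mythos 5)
Your strategy is the same as the paper's: use the fact that the $(0,1)$-part of $\nabla^+$ with respect to $J_-$ defines the $J_-$-holomorphic structure on $K_+^{-1}$ (so that $\epsilon s_0^+$ is $J_-$-holomorphic iff the $(0,1)_-$-component of the traced connection form in the frame $s_0^+$ equals $-\bar\partial_-\log\epsilon$), feed in Lemma~\ref{Bismut2}, rewrite $g^{-1}d\bar z^+_l$ through the inverse Hermitian metric matrix (the paper's $Q_{ml}=2[\phi^T(\phi_s-\tfrac{\sqrt{-1}}{2}F)^{-1}\phi]_{ml}$), pass to the $z^-$-coordinates, and invoke Lemma~\ref{det}. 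Your change-of-basis formulas and the expression for $\bar\partial_-\log\epsilon$ are correct and are just the dual-form version of the paper's contraction with $\partial_{\bar z^-}$.

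The gap is that the entire content of the lemma is the verification you leave unexecuted. You assert that after the trace ``the required cancellation then reduces to a direct application of Jacobi's formula,'' but in the paper this is the bulk of the proof and needs more than Jacobi's formula: the explicit form of $Q$, the identity $(\phi+\tfrac{\sqrt{-1}}{2}F)_{pq}(\phi^{-1})_{qj,k}=\tfrac{\sqrt{-1}}{2}(\phi^{-1}F)_{pj,k}-(\phi^{-1})_{qj}\phi_{spq,k}$, the symmetrization $(\phi-\tfrac{\sqrt{-1}}{2}F)+(\phi^T-\tfrac{\sqrt{-1}}{2}F)=2(\phi_s-\tfrac{\sqrt{-1}}{2}F)$, and two separate applications of Jacobi's formula (to $\det(\phi_s-\tfrac{\sqrt{-1}}{2}F)$ and to $\det\phi$), all \emph{after} contracting with the $\bar z^-$-directions---the coefficients from Lemma~\ref{Bismut2} do not assemble into log-determinant derivatives for a general direction. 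Moreover, your heuristic that the recorded formula for $\det(g_{\bar j i})$ already ``shows $\alpha$ is governed by the same two determinants'' is not a valid step: since $s_0^+$ is not known to be $J_-$-holomorphic (that is precisely what is being proved), the Chern-connection form in this frame is not $\partial_-\log$ of the metric determinant, so the particular combination $\det(\phi_s\Xi)/(\det\phi)^2$ defining $\epsilon$ cannot be read off from $\det H$; it only emerges from the index computation you have skipped.
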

\begin{proof}
Let $g^{-1}d\bar{z}_l^+=Q_{ml}\partial_{z_m^+}$. Then
\begin{equation}Q=2\phi^T(\phi_s-\frac{\sqrt{-1}}{2}F)^{-1}\phi.\label{Q}\end{equation}
Therefore, by Lemma~\ref{Bismut2} the connection 1-form $\sigma$ of $\nabla^+$ in the frame $s_0^+$ of $K_+^{-1}$ is
\begin{eqnarray*}\sigma=\frac{Q_{jl}}{4}\times\{\sqrt{-1}(\phi^{-1})_{ij,k}(\phi^{-1})_{kl}d\theta_i+(\phi^{-1})_{kj,i}[\delta_{kl}+\frac{\sqrt{-1}}{2}(\phi^{-1}F)_{kl}]d\mu_i\\
-\frac{\sqrt{-1}}{2}[(\phi^{-1}F)_{kj,i}-(\phi^{-1}F)_{ij,k}](\phi^{-1})_{kl}d\mu_i\}.\end{eqnarray*}

We want to express $\sigma(\partial_{\bar{z}_n^-})$ in the form of $-\partial_{\bar{z}_n^-}\log \epsilon$ for a real function $\epsilon$ to be determined. If this holds, then
\[\nabla^+_{\partial_{\bar{z}_n^-}}(\epsilon s_0^+)=\partial_{\bar{z}_n^-}\epsilon \times s_0^+-\epsilon \partial_{\bar{z}_n^-}\log \epsilon \times s_0^+=0, \]
i.e., $\epsilon s_0^+$ is a $J_-$-holomorphic section. This is possible because the $(0,1)$-part of $\nabla^+$ w.r.t. $J_-$ gives rise to a $J_-$-holomorphic structure on $K_+^{-1}$.

Note that
\begin{eqnarray*}\partial_{\bar{z}^-}&=&\frac{1}{2}(\partial_{\theta^-}+\sqrt{-1}J_-\partial_{\theta^-})
=\frac{1}{2}(\partial_{\theta}+\sqrt{-1}J_-\partial_{\theta})\\&=&\frac{1}{2}(\phi^{-1})^T[(\phi^T-\frac{\sqrt{-1}}{2}F)\partial_\theta-\sqrt{-1}\partial_\mu]
\end{eqnarray*}
and thus \[\partial_{\bar{z}_n^-}\log \epsilon=-\frac{\sqrt{-1}}{2}(\phi^{-1})_{np}(\log \epsilon)_{,p}.\]
Now
\begin{eqnarray*}
-8\sqrt{-1}\sigma(\partial_{\bar{z}_n^-})&=&\{(\phi+\frac{\sqrt{-1}}{2}F)_{pq}(\phi^{-1})_{qj,k}(\phi^{-1})_{kl}-(\phi^{-1})_{kj,p}[\delta_{kl}+\frac{\sqrt{-1}}{2}(\phi^{-1}F)_{kl}] \\
&+&\frac{\sqrt{-1}}{2}[(\phi^{-1}F)_{kj,p}-(\phi^{-1}F)_{pj,k}](\phi^{-1})_{kl} \}\times Q_{jl}\times (\phi^{-1})_{np}.
\end{eqnarray*}
Note that
\[(\phi+\frac{\sqrt{-1}}{2}F)_{pq}(\phi^{-1})_{qj,k}=\frac{\sqrt{-1}}{2}(\phi^{-1}F)_{pj,k}-(\phi^{-1})_{qj}\phi_{spq,k}.\]
Thus,
\begin{eqnarray*}8\sqrt{-1}\sigma(\partial_{\bar{z}_n^-})&=&\{(\phi^{-1})_{qj}\phi_{spq,k}(\phi^{-1})_{kl}+(\phi^{-1})_{kj,p}[\delta_{kl}+\frac{\sqrt{-1}}{2}(\phi^{-1}F)_{kl}]\\
&-&\frac{\sqrt{-1}}{2}(\phi^{-1}F)_{kj,p}(\phi^{-1})_{kl}\}\times Q_{jl}\times (\phi^{-1})_{np}.\end{eqnarray*}
On one side, by Eq.~(\ref{Q}) we have
\begin{eqnarray*}(\phi^{-1})_{qj}\phi_{spq,k}(\phi^{-1})_{kl}Q_{jl}&=&[(\phi^T)^{-1}Q\phi^{-1}]_{qk}\phi_{spq,k}\\
&=&2(\phi_s-\frac{\sqrt{-1}}{2}F)^{-1}_{qk}(\phi_s-\frac{\sqrt{-1}}{2}F)_{kq,p}\\
&=&2[\log\det(\phi_s-\frac{\sqrt{-1}}{2}F)]_{,p}\\
&=&[\log\det(\phi_s\Xi)]_{,p},\end{eqnarray*}
where the well-known formula concerning the differential of the determinant of a matrix-valued function $\Theta$
\[d\log\det \Theta=\sum_{i,j}(\Theta^{-1})_{ij}d\Theta_{ji}\]
is used.
On the other side,
\begin{eqnarray*}&-&\frac{\sqrt{-1}}{2}(\phi^{-1}F)_{kj,p}(\phi^{-1})_{kl}Q_{jl}=[\delta_{kj}-\frac{\sqrt{-1}}{2}(\phi^{-1}F)_{kj}]_{,p}[(\phi^T)^{-1}Q]_{jk}\\
&=&2[\delta_{kj}-\frac{\sqrt{-1}}{2}(\phi^{-1}F)_{kj}]_{,p}\phi_{jq}(\phi_s-\frac{\sqrt{-1}}{2}F)^{-1}_{qk}\\
&=&2\{(\phi-\frac{\sqrt{-1}}{2}F)_{kq,p}-[\delta_{kj}-\frac{\sqrt{-1}}{2}(\phi^{-1}F)_{kj}]\phi_{sjq,p}\}\times(\phi_s-\frac{\sqrt{-1}}{2}F)^{-1}_{qk} \\
&=&2\{(\phi_s-\frac{\sqrt{-1}}{2}F)_{kq,p}-[\delta_{kj}-\frac{\sqrt{-1}}{2}(\phi^{-1}F)_{kj}]\phi_{sjq,p}\}\times(\phi_s-\frac{\sqrt{-1}}{2}F)^{-1}_{qk}\\
&=&[\log\det(\phi_s\Xi)]_{,p}-2(\phi^{-1})_{lj}(\phi-\frac{\sqrt{-1}}{2}F)_{kl}\phi_{sjq,p}\times(\phi_s-\frac{\sqrt{-1}}{2}F)^{-1}_{qk}.
\end{eqnarray*}
Since by Eq.~(\ref{Q})\begin{eqnarray*}
[\delta_{kl}+\frac{\sqrt{-1}}{2}(\phi^{-1}F)_{kl}]\times Q_{jl}=2(\phi+\frac{\sqrt{-1}}{2}F)_{kl}(\phi_s-\frac{\sqrt{-1}}{2}F)^{-1}_{ql}\phi_{jq},
\end{eqnarray*}
we have
\begin{eqnarray*}(\phi^{-1})_{kj,p}[\delta_{kl}+\frac{\sqrt{-1}}{2}(\phi^{-1}F)_{kl}] Q_{jl}&=&-2(\phi^{-1})_{kj}(\phi+\frac{\sqrt{-1}}{2}F)_{kl}(\phi_s-\frac{\sqrt{-1}}{2}F)^{-1}_{ql}\phi_{sjq,p}\\
&=&-2(\phi^{-1})_{kj}(\phi^T-\frac{\sqrt{-1}}{2}F)_{lk}(\phi_s-\frac{\sqrt{-1}}{2}F)^{-1}_{ql}\phi_{sjq,p}\\
&=&-2(\phi^{-1})_{lj}(\phi^T-\frac{\sqrt{-1}}{2}F)_{kl}(\phi_s-\frac{\sqrt{-1}}{2}F)^{-1}_{qk}\phi_{sjq,p}.\end{eqnarray*}
Note that
\[(\phi-\frac{\sqrt{-1}}{2}F)+(\phi^T-\frac{\sqrt{-1}}{2}F)=2(\phi_s-\frac{\sqrt{-1}}{2}F),\]
and consequently,
\[-4(\phi^{-1})_{lj}(\phi_s-\frac{\sqrt{-1}}{2}F)_{kl}\phi_{sjq,p}(\phi_s-\frac{\sqrt{-1}}{2}F)^{-1}_{qk}=-4(\phi^{-1})_{lj}\phi_{sjl,p}=-4(\log \det \phi)_{,p}.\]
Combining all the above pieces together, we finally have
\begin{eqnarray*}8\sqrt{-1}\sigma(\partial_{\bar{z}_n^-})&=&2(\phi^{-1})_{np}\{[\log\det(\phi_s\Xi)]_{,p}-2(\log \det \phi)_{,p}\}\\
&=&2(\phi^{-1})_{np}[\log\frac{\det(\phi_s\Xi)}{(\det \phi)^2}]_{,p}.\end{eqnarray*}
Therefore, our $\epsilon$ can be chosen to be
\[\epsilon=[\frac{\det(\phi_s\Xi)}{(\det \phi)^2}]^{-1/2}.\]
Thus we have proved that $s^+=\epsilon s_0^+$ is a $J_-$-holomorphic section of $K_+^{-1}$.
\end{proof}
\emph{Remark}. Similarly, if we set $s_0^-=\partial_{z_1^-}\wedge\cdots\wedge \partial_{z_n^-}$, then $\epsilon s_0^-$ is a $J_+$-holomorphic section of $K_-^{-1}$. These together imply the following proposition:
\begin{proposition}$s=\epsilon s_0^+\otimes s_0^-$ is a GH section of $K_+^{-1}\otimes K_-^{-1}$. In particular,
\[h_\omega(s,s)=c[\det(\phi_s\Xi)]^{-1},\]
where $c$ is a positive constant.
\end{proposition}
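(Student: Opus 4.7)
The plan is to address the two claims separately. For the GH assertion, I would exploit the decomposition $\mathrm{D}=\bar{\delta}_{+}+\bar{\delta}_{-}$ of the GH structure on $K_+^{-1}\otimes K_-^{-1}$ and the fact that the section $s=\epsilon(s_0^+\otimes s_0^-)$ admits two equivalent factorizations, $s=(\epsilon s_0^+)\otimes s_0^-$ and $s=s_0^+\otimes(\epsilon s_0^-)$. By Lemma~\ref{holos}, $\epsilon s_0^+$ is a $J_-$-holomorphic section of $K_+^{-1}$; since $s_0^-$ is tautologically $J_-$-holomorphic in $K_-^{-1}$, the first factorization gives $\bar{\delta}_-s=0$. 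The symmetric statement recorded in the Remark following Lemma~\ref{holos} tells us that $\epsilon s_0^-$ is $J_+$-holomorphic, and since $s_0^+$ is tautologically $J_+$-holomorphic, the second factorization yields $\bar{\delta}_+s=0$. Hence $s$ is GH.

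For the norm-squared, I would pass through two comparisons. First, the symplectic metric on $K_+^{1/2}\otimes K_-^{1/2}$ satisfies $h_\omega(t,t)=c_0 h(t,t)\cdot vol_g/vol_\omega$, where $h$ is the Hermitian metric induced from $g$. Since $K_+^{-1}\otimes K_-^{-1}$ is the square of the dual of $K_+^{1/2}\otimes K_-^{1/2}$, for any section $s$ of this line bundle the corresponding relation is $h_\omega(s,s)=c_0^{-2}h(s,s)\cdot(vol_\omega/vol_g)^2$, and in admissible coordinates $(vol_\omega/vol_g)^2=1/\det g$ up to a fixed positive constant.

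Second, I would compute $h(s,s)=|\epsilon|^2 h_+(s_0^+,s_0^+)h_-(s_0^-,s_0^-)$ piece by piece. Applying the formula $\det(g_{\bar j i})=(1/2)^n\det(\phi_s-\frac{\sqrt{-1}}{2}F)[\det\phi^{-1}]^2$ from the text together with Lemma~\ref{det} gives $h_+(s_0^+,s_0^+)=(1/2)^n[\det(\phi_s\Xi)]^{1/2}/(\det\phi)^2$, and the symmetric computation on the $J_-$-side (with the replacements $\phi\leftrightarrow\phi^T$, $F\leftrightarrow -F$) yields the same value for $h_-(s_0^-,s_0^-)$. Combined with $\epsilon^2=(\det\phi)^2/\det(\phi_s\Xi)$, this produces $h(s,s)=c_1/(\det\phi)^2$. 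Computing $\det g$ from its block-matrix form in admissible coordinates via the identity $(\phi^{-1})_s=\phi^{-1}\phi_s(\phi^T)^{-1}$ gives $\det g=\det(\phi_s\Xi)/(\det\phi)^2$. The $(\det\phi)^{\pm 2}$ factors cancel, yielding $h_\omega(s,s)=c[\det(\phi_s\Xi)]^{-1}$.

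The main obstacle will be bookkeeping: tracking how $h_\omega$ transforms under the dual-square identification of line bundles, and verifying the symmetric $J_-$-side computation that is not stated explicitly in the text. All the hard technical work for $\epsilon$ has already been carried out in Lemma~\ref{holos}, so for the GH assertion the factorization trick essentially finishes the argument.
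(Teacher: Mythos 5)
Your proposal is correct and follows essentially the same route as the paper: the GH claim is reduced to Lemma~\ref{holos} (and its $J_+$-side analogue) via the two factorizations of $s$, and the norm is obtained from $\epsilon^2$, $\det(g_{\bar j i})$ with Lemma~\ref{det}, $\det g$, and the $(vol_\omega/vol_g)^2$ rescaling relating $h_\omega$ to the $g$-induced metric on the squared dual bundle. The only difference is cosmetic: you spell out the $J_-$-side norm computation and the dual-square bookkeeping that the paper compresses into ``$s_0^+$ and $s_0^-$ have the same length'' and a footnote.
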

\begin{proof}Since $s_0^-$ is obviously $J_-$-holomorphic, this together with Lemma~\ref{holos} shows that $s$ is also $J_-$-holomorphic. Similarly, $s$ is $J_+$-holomorphic.

Obviously, $s_0^+$ and $s_0^-$ have the same length w.r.t. $g$. Let $h_g(s,s)$ be the square of the length of $s$ w.r.t. $g$. Then by Lemma~\ref{det} we immediately have
\[h_g(s,s)=(\frac{1}{2})^{2n}\det \phi_s \times \det \Xi \times (\det \phi)^{-4}\times \epsilon^2=(\frac{1}{2})^{2n}(\det \phi)^{-2},\]
and thus for a positive constant $c_0$ we have\footnote{It should be reminded that we are dealing with $K_+^{-1}\otimes K_-^{-1}$ rather than $K_+^{1/2}\otimes K_-^{1/2}$.}
\[h_\omega(s,s)=c_0(\frac{vol_\omega}{vol_g})^2\times h_g(s,s)=c[\det(\phi^{-1})_s\det \Xi]^{-1}  \times (\det \phi)^{-2}=c[\det(\phi_s\Xi)]^{-1},\]
where we have used the fact that $\phi^T(\phi^{-1})_s\phi=\phi_s$ and $c:=c_0/4^n$. Our conclusion then follows.
\end{proof}
Now according to our analysis in \S~\ref{goto}, \emph{in the metric splitting} and in the local frame $s$, the connection form of generalized connection $D^\omega$ on $K_+^{-1}\otimes K_-^{-1}$ has the following form
\[\chi_0=\frac{1}{2}(\partial_++\partial_-)\log [\det(\phi_s\Xi)]^{-1}+\frac{1}{2}g^{-1}(\partial_+-\partial_-)\log [\det(\phi_s\Xi)]^{-1},\]
while \emph{in the symplectic splitting}, the connection 1-form of the ordinary connection part $\nabla^\omega$ of this generalized connection is thus
\[\chi=\frac{1}{2}(\partial_++\partial_-)\log [\det(\phi_s\Xi)]^{-1}+\frac{1}{2}bg^{-1}(\partial_+-\partial_-)\log [\det(\phi_s\Xi)]^{-1}.\]
\begin{proposition}\label{scal}The curvature of $\nabla^\omega$ on $K_+^{-1}\otimes K_-^{-1}$ is
\[R=\sqrt{-1}d(J_+^*+J_-^*)^{-1}d\log [\det(\phi_s\Xi)]^{-1},\]
and thus the scalar curvature $\kappa$ has the following form
\begin{equation}\kappa=\partial_i[(\partial_j\log \det(\phi_s\Xi)^{1/2})(\Xi^{-1})_{ij}],\label{sca}\end{equation}
where $\partial_i=\partial_{\mu_i}$.
\end{proposition}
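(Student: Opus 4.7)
} The strategy divides into two steps: first convert the mixed $(J_\pm,b,g)$-expression for $\chi$ into the clean form involving only $(J_+^*+J_-^*)^{-1}$, and then specialize to toric coordinates and take the $\omega$-trace.

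I would begin by decomposing $\partial_\pm=\tfrac12(d-\sqrt{-1}J_\pm^*d)$ and write
\begin{equation*}
\chi=\tfrac12 df-\tfrac{\sqrt{-1}}{4}\bigl[(J_+^*+J_-^*)+bg^{-1}(J_+^*-J_-^*)\bigr]df,
\qquad f:=\log[\det(\phi_s\Xi)]^{-1}.
\end{equation*}
The first term $\tfrac12 df$ is exact and drops out of $d\chi$, so the only thing to prove is the algebraic identity
\begin{equation*}
(J_+^*+J_-^*)+bg^{-1}(J_+^*-J_-^*)=-4(J_+^*+J_-^*)^{-1}.
\end{equation*}
Setting $A=J_++J_-$ and $B=J_+-J_-$ as endomorphisms of $T$, the relations $J_\pm^2=-\textup{Id}$ give $AB+BA=0$ and $A^2+B^2=-4$, whence $BA^{-1}B=4A^{-1}+A$. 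Using $g=-\tfrac12\omega A$ and $b=-\tfrac12\omega B$ from (\ref{sy}), one rewrites $bg^{-1}=\omega B A^{-1}\omega^{-1}$ on $\Omega^1$. The symplectic-adjoint relation $J_-=-\omega^{-1}J_+^*\omega$ (together with its partner for $J_+$) yields $\omega A\omega^{-1}=-(J_+^*+J_-^*)$ and hence $\omega A^{-1}\omega^{-1}=-(J_+^*+J_-^*)^{-1}$; combining these with $BA^{-1}B=4A^{-1}+A$ and the identity $\omega^{-1}(J_+^*-J_-^*)=B\omega^{-1}$ delivers the claim. This gives
\[R=d\chi=\sqrt{-1}\,d(J_+^*+J_-^*)^{-1}d\log[\det(\phi_s\Xi)]^{-1},\]
which is the first half of the statement.

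For the scalar curvature, toric invariance reduces $df$ to $\partial_j f\,d\mu_j$. The matrix of $(\tfrac{J_++J_-}{2})^{-1}$ recalled in \S\ref{sym} has upper-right block $\Xi^{-1}$, so under block transposition (which gives the matrix of the dual map in the basis $\{d\theta_i,d\mu_i\}$) the operator $(J_+^*+J_-^*)^{-1}$ sends $d\mu_j$ into a 1-form whose $d\theta_a$-coefficient is controlled by $\tfrac12(\Xi^{-1})_{aj}$ (after the integration-by-parts identification explained below). I would then form $d\alpha$ for $\alpha=(J_+^*+J_-^*)^{-1}df$, observing that toric invariance forces $\alpha$ to have $\mu$-dependent coefficients only, so all $d\mu_k\wedge d\mu_i$ contributions vanish when wedged against $\omega^{n-1}$; the Lefschetz identity $d\mu_i\wedge d\theta_i\wedge\omega^{n-1}=\omega^n/n$ then converts $2n P_1\wedge\omega^{n-1}/\omega^n$ into a divergence of the form $\partial_i[(\Xi^{-1})_{ij}\,\partial_j\log\det(\phi_s\Xi)^{1/2}]$, which is the stated formula.

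The main obstacle is the passage from the $R$-block of the block-transposed matrix (which is what literally appears when $(J_+^*+J_-^*)^{-1}$ is applied to $d\mu_j$) to the $Q$-block $\Xi^{-1}$ (which appears in the final answer). This is not merely a relabelling: one must use the block identities among $P,Q,R,S$ implicit in $N N=\textup{Id}$ for $N=(\frac{J_++J_-}{2})^{-1}$, together with the $d\mu$-only dependence of $f$ and the $\omega$-pairing rule above, to convert derivatives of $R_{ja}\partial_j f$ into derivatives of $(\Xi^{-1})_{ij}\partial_j f$ up to an exact correction. Once that bookkeeping is done correctly, matching the overall normalization (which ties the $K_+^{-1}\otimes K_-^{-1}$ connection back to Goto's Ricci form on $U_{n,0}$ via the $q^\flat$-isomorphism and the factor between these line bundles) yields the displayed expression for $\kappa$.
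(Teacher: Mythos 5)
Your first half is correct and is essentially the paper's own argument: after writing $\chi=\tfrac12 df-\tfrac{\sqrt{-1}}{4}\bigl[(J_+^*+J_-^*)+bg^{-1}(J_+^*-J_-^*)\bigr]df$, everything rests on the operator identity $(J_+^*+J_-^*)+bg^{-1}(J_+^*-J_-^*)=-4(J_+^*+J_-^*)^{-1}$, which the paper obtains from $bg^{-1}=-(J_+^*-J_-^*)(J_+^*+J_-^*)^{-1}$ and the anticommutation of $J_+^*\pm J_-^*$; your tangent-side computation with $A=J_++J_-$, $B=J_+-J_-$ and $BA^{-1}B=4A^{-1}+A$, conjugated by $\omega$, is an equivalent repackaging and is fine.

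The gap is in the second half. What you single out as ``the main obstacle'' is an artifact of misreading the block-matrix convention, and the route you sketch to get around it would not work. In the paper's convention the displayed matrices act on the column of frame vectors (see the proof of the corollary in \S\ref{bou}, where $A\partial_\theta=-\tfrac12\Xi^{-1}F\phi_0\,\partial_\theta-\Xi^{-1}\partial_\mu$), so dualizing gives Eq.~(\ref{+-}) at once: the $d\theta$-coefficient of $(\tfrac{J_+^*+J_-^*}{2})^{-1}d\mu_j$ is literally the symmetric block $\Xi^{-1}$, and no passage through the lower-left block is needed. Your proposed repair is moreover unavailable as stated: $N\cdot N=\textup{Id}$ is false for $N=(\tfrac{J_++J_-}{2})^{-1}$ (only $N\cdot N^{-1}=\textup{Id}$ holds), and the claimed conversion of $\partial_a[R_{ja}\partial_jf]$ into $\partial_a[(\Xi^{-1})_{ja}\partial_jf]$ ``up to an exact correction'' is not justified --- already at $F=0$ the lower-left block is $-\phi^T\phi_s^{-1}\phi$, which is not $\pm\phi_s^{-1}$, so the two divergences genuinely differ. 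Thus the step you yourself flag as the crux is left unproven, while under the correct reading it simply disappears. Finally, the normalization you postpone is not optional: Goto's $P_1$ is defined on $U_{n,0}\cong K_+^{1/2}\otimes K_-^{1/2}=(K_+^{-1}\otimes K_-^{-1})^{-1/2}$, so $P_1=-\sqrt{-1}\,(-\tfrac12)R=d(J_+^*+J_-^*)^{-1}d\log\det(\phi_s\Xi)^{1/2}$, and it is precisely this factor $-\tfrac12$ that produces the exponent $1/2$ in Eq.~(\ref{sca}); without carrying it out, the trace computation cannot land on the stated formula.
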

\begin{proof}If $f$ is a smooth function depending only on $\mu$, then
\[(\partial_++\partial_-)df=df-\frac{\sqrt{-1}}{2}(J_+^*+J_-^*)df\]
and
\[(\partial_+-\partial_-)df=-\frac{\sqrt{-1}}{2}(J_+^*-J_-^*)df.\]
Note that \[bg^{-1}=\omega(J_+-J_-)(J_++J_-)^{-1}\omega^{-1}=-(J_+^*-J_-^*)(J_+^*+J_-^*)^{-1}.\] Therefore, we have
\begin{eqnarray*}bg^{-1}(\partial_+-\partial_-)df&=&\frac{\sqrt{-1}}{2}(J_+^*-J_-^*)(J_+^*+J_-^*)^{-1}(J_+^*-J_-^*)df\\
&=&-\frac{\sqrt{-1}}{2}(J_+^*-J_-^*)^2(J_+^*+J_-^*)^{-1}df,\end{eqnarray*}
and consequently
\begin{eqnarray*}&-&\frac{\sqrt{-1}}{2}(J_+^*+J_-^*)df+bg^{-1}(\partial_+-\partial_-)df\\
&=&-\frac{\sqrt{-1}}{2}[(J_+^*+J_-^*)^2+(J_+^*-J_-^*)^2](J_+^*+J_-^*)^{-1}df\\
&=&2\sqrt{-1}(J_+^*+J_-^*)^{-1}df.\end{eqnarray*}
Therefore, the curvature of $\nabla^\omega$ is
\[R=d\chi=\sqrt{-1}d(J_+^*+J_-^*)^{-1}d\log [\det(\phi_s\Xi)]^{-1}.\]
As for the scalar curvature, we note that the Ricci form is
\[P_1=d(J_+^*+J_-^*)^{-1}d\log [\det(\phi_s\Xi)]^{1/2}\]
and that
\begin{equation}(\frac{J_+^*+J_-^*}{2})^{-1}d\mu=\Xi^{-1}d\theta-\frac{1}{2}\Xi^{-1}F(\phi_s)^{-1}\phi_ad\mu.\label{+-}\end{equation}
Substitute these into Eq.~(\ref{sc}) and note that on the right side of Eq.~(\ref{+-}) only the first term has contribution to the scalar curvature. An elementary computation leads to the final expression of $\kappa$.
\end{proof}
\emph{Remark}. Though $\phi_a$ has no contribution to $\kappa$, it does to the Ricci curvature. If $F=0$, then we have
\[\kappa=\partial_i[(\partial_j\log \det\phi_s)(\phi_s)^{-1}_{ij}],\]
which coincides with $\kappa=-(\phi_s)_{ij,ij}$. This equivalence is classical and can be found in \cite{Ab}. However, if $F\neq 0$ the equivalence of Eq.~(\ref{sca}) with Eq.~(\ref{sca1}) is not evident. This will be addressed in the next subsection.

\subsection{Equivalence between the two versions of scalar curvature}\label{equ}
From the viewpoint of scalar curvature as a moment map which was carried out by Boulanger, we know that for a toric GK manifold of symplectic type, the scalar curvature is of the following form
\[\kappa=-(\Xi^{-1})_{ij,ij}\]
where $\Xi=\phi_s+\frac{1}{4}F(\phi_s)^{-1}F$. On the other side, from the viewpoint of our refined version of Goto's formalism, we note that $\kappa$ takes another seemingly rather different form
\[\kappa=\partial_i[(\partial_j\log \det(\phi_s\Xi)^{1/2})(\Xi^{-1})_{ij}].\]
The goal of this subsection is then to show that the two expressions are actually the same.
\begin{proposition}For toric GK manifolds of symplectic type, Boulanger's scalar curvature is the same as Goto's, i.e.
\[-(\Xi^{-1})_{ij,ij}=\partial_i[(\partial_j\log \det(\phi_s\Xi)^{1/2})(\Xi^{-1})_{ij}].\]
\end{proposition}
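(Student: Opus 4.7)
The plan is to prove the stronger first-order (vector) identity
\[(\Xi^{-1})_{ij,j} + (\Xi^{-1})_{ij}\,\partial_j L = 0,\qquad L := \log\det(\phi_s\Xi)^{1/2};\]
applying $\partial_i$ and using commutativity of partial derivatives then yields $\partial_i[(\Xi^{-1})_{ij}\partial_j L] = -(\Xi^{-1})_{ij,ij}$, which is precisely the statement. The algebraic key is the factorization $\Xi = N\phi_s^{-1}\bar{N}$, where $N := \phi_s - \tfrac{\sqrt{-1}}{2}F$ and $\bar{N} = N^T = \phi_s + \tfrac{\sqrt{-1}}{2}F$; this follows by direct expansion and is already implicit in Lemma~\ref{det}. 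It gives $\Xi^{-1} = \bar{N}^{-1}\phi_s N^{-1}$, and since Lemma~\ref{det} shows $\det N\in\mathbb{R}$, one has $L = \log\det N$ with $\partial_j L = \textup{tr}(N^{-1}\phi_{s,j})$.

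Next I would differentiate $\Xi^{-1}$ using the identity $\textup{I} - \bar{N}^{-1}\phi_s = \tfrac{\sqrt{-1}}{2}\bar{N}^{-1}F$ to obtain
\[\partial_j\Xi^{-1} = \tfrac{\sqrt{-1}}{2}\,\bar{N}^{-1}\phi_{s,j}\bar{N}^{-1}FN^{-1} - \bar{N}^{-1}\phi_s N^{-1}\phi_{s,j}N^{-1};\]
contracting with $j$ splits $(\Xi^{-1})_{ij,j}$ into an ``$F$-piece'' and a ``non-$F$-piece''. In index notation the non-$F$-piece contributes $-(\Xi^{-1})_{ia}(N^{-1})_{bj}\phi_{s,abj}$, while $(\Xi^{-1})_{ij}\partial_j L = (\Xi^{-1})_{ij}(N^{-1})_{ab}\phi_{s,abj}$. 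Because $\phi_{s,abj} = \partial_a\partial_b\partial_j\tau$ is totally symmetric in $(a,b,j)$, only the $(a,b,j)$-symmetrizations of the coefficients contribute, and a direct check shows the six-term symmetrizations of $(\Xi^{-1})_{ia}(N^{-1})_{bj}$ and of $(\Xi^{-1})_{ij}(N^{-1})_{ab}$ coincide; hence these two contributions cancel. This is precisely the Abreu-type cancellation which, in the K\"ahler limit $F=0$, recovers the classical identity $-(\phi_s^{-1})_{ij,ij} = \partial_i[(\phi_s^{-1})_{ij}\partial_j\log\det\phi_s]$.

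It remains to show the $F$-piece $\tfrac{\sqrt{-1}}{2}(\bar{N}^{-1})_{ia}(\bar{N}^{-1}FN^{-1})_{bj}\phi_{s,abj}$ vanishes. From $(N^{-1})^T = \bar{N}^{-1}$ and $F^T = -F$ one computes $(\bar{N}^{-1}FN^{-1})^T = -\bar{N}^{-1}FN^{-1}$, so $\bar{N}^{-1}FN^{-1}$ is antisymmetric; consequently in the six-fold symmetrization over $(a,b,j)$ every pair cancels (for instance $(\bar{N}^{-1}FN^{-1})_{bj} + (\bar{N}^{-1}FN^{-1})_{jb} = 0$), and the contraction against the totally symmetric $\phi_{s,abj}$ is zero. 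The main obstacle is locating the right decomposition of $\partial_j\Xi^{-1}$ so the problem splits cleanly into one ``symmetric'' piece handled by Abreu-type cancellation and one ``antisymmetric'' piece killed by $F^T = -F$ against the totally symmetric cubic derivative $\phi_{s,abj}$; once the factorization $\Xi = N\phi_s^{-1}\bar{N}$ and the representation $L=\log\det N$ are in hand, everything else is bookkeeping.
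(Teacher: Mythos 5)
Your proof is correct and follows essentially the same route as the paper: the same factorization $\Xi^{-1}=\bar{N}^{-1}\phi_s N^{-1}$ (Eq.~(\ref{FI})), the same use of Lemma~\ref{det} to write $\log\det(\phi_s\Xi)^{1/2}=\log\det N$ via Jacobi's formula, and the same key cancellation in which the antisymmetry of $F$ (for you, of $\bar{N}^{-1}FN^{-1}$) is played off against the total symmetry of the third derivatives of $\tau$. Your packaging---proving the first-order identity $(\Xi^{-1})_{ij,j}+(\Xi^{-1})_{ij}\partial_j L=0$ by differentiating $\Xi^{-1}$ directly---is only a reorganization of the paper's integration-by-parts step, which establishes the same identity implicitly (its remainder term $q_i=0$ is exactly your vanishing $F$-piece).
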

\begin{proof}

First, recall that
\[\det(\phi_s+\frac{\sqrt{-1}}{2}F)=\det (\phi_s\Xi)^{1/2}.\]
 Additionally, $M_1^{-1}$ in the proof of Lemma~\ref{det}, of course, also commutes with $J$ there. Then the restriction of $M_1^{-1}$ on $\mathbb{C}^n$ implies the following formula which will be used later:
\begin{equation}\Xi^{-1}=(\phi_s-\frac{\sqrt{-1}}{2}F)^{-1}\phi_s(\phi_s+\frac{\sqrt{-1}}{2}F)^{-1}.\label{FI}\end{equation}
Now using the formula concerning the differential of the determinant of a matrix-valued function $\Theta$
\[d\log\det \Theta=\sum_{i,j}(\Theta^{-1})_{ij}d\Theta_{ji}\]
again, we have
\[
(\partial_j\log \det(\phi_s\Xi)^{1/2})(\Xi^{-1})_{ij}=(\phi_s+\frac{\sqrt{-1}}{2}F)^{-1}_{pq}\phi_{sqp,j}(\phi_s-\frac{\sqrt{-1}}{2}F)^{-1}_{mi}\phi_{snm}(\phi_s+\frac{\sqrt{-1}}{2}F)^{-1}_{jn}.\]
Note that
\[(\phi_s+\frac{\sqrt{-1}}{2}F)^{-1}_{pq}\phi_{sqp,j}=(\phi_s+\frac{\sqrt{-1}}{2}F)^{-1}_{pq}\phi_{sqj,p}=-(\phi_s+\frac{\sqrt{-1}}{2}F)^{-1}_{pq,p}(\phi_s+\frac{\sqrt{-1}}{2}F)_{qj}\]
where we have used the fact that $\phi_s$ is the Hessian of $\tau$. Now we have
\begin{eqnarray*}(\partial_j\log \det(\phi_s\Xi)^{1/2})(\Xi^{-1})_{ij}&=&-(\phi_s+\frac{\sqrt{-1}}{2}F)^{-1}_{pq,p}\phi_{sqm}(\phi_s-\frac{\sqrt{-1}}{2}F)^{-1}_{mi}\\
&=&-\Xi^{-1}_{pi,p}+(\phi_s+\frac{\sqrt{-1}}{2}F)^{-1}_{pq}[\phi_{sqm}(\phi_s-\frac{\sqrt{-1}}{2}F)^{-1}_{mi}]_{,p}\end{eqnarray*}
where Eq.~(\ref{FI}) is used. If we could further prove that the second term of the last line vanishes, then we would be done. Denote this term by $q_i$. Then
\begin{eqnarray*}q_i&=&(\phi_s+\frac{\sqrt{-1}}{2}F)^{-1}_{pq}[\delta_{iq}+\frac{\sqrt{-1}}{2}F_{qm}(\phi_s-\frac{\sqrt{-1}}{2}F)^{-1}_{mi}]_{,p}\\
&=&-\frac{\sqrt{-1}}{2}(\phi_s+\frac{\sqrt{-1}}{2}F)^{-1}_{pq}F_{qm}(\phi_s-\frac{\sqrt{-1}}{2}F)^{-1}_{ni}\phi_{srn,p}(\phi_s-\frac{\sqrt{-1}}{2}F)^{-1}_{mr}.\end{eqnarray*}
It can be easily checked that the coefficient of $F_{qm}$ is symmetric in $q, m$ (that $\phi_s$ is a Hessian should be used here again). Since $F_{qm}=-F_{mq}$, $q_i$ consequently really vanishes! This completes our proof.
\end{proof}
\section*{Acknowledgemencts}
This study is supported by the Natural Science Foundation of Jiangsu Province (BK20150797). The manuscript is prepared during the author's stay in the Department of Mathematics at the University of Toronto and this stay is funded by the China Scholarship Council (201806715027). The author also thanks Professor Marco Gualtieri for his invitation and hospitality.

\end{document}